\documentclass[11pt]{amsart}

\usepackage[leqno]{amsmath}
\usepackage{amssymb,latexsym,soul,cite,amsthm,color,enumitem,graphicx,mathtools,microtype,accents,comment,tikz}
\usepackage[colorlinks=true,urlcolor=burntsienna,citecolor=burntsienna,linkcolor=burntsienna,linktocpage,pdfpagelabels,bookmarksnumbered,bookmarksopen]{hyperref}
\definecolor{burntsienna}{rgb}{0.91, 0.45, 0.32}
\usepackage[english]{babel}
\usepackage[left=2.5cm,right=2.5cm,top=2.5cm,bottom=2.5cm]{geometry}

\numberwithin{equation}{section}

\newtheorem{theorem}{Theorem}[section]
\theoremstyle{plain}
\newtheorem{lemma}[theorem]{Lemma}
\theoremstyle{plain}
\newtheorem{proposition}[theorem]{Proposition}
\theoremstyle{plain}

\newtheorem{definition}[theorem]{Definition}
\theoremstyle{definition}
\newtheorem{remark}[theorem]{Remark}

\newcommand{\N}{{\mathbb N}}

\newcommand{\R}{{\mathbb R}}

\newcommand{\beq}{\begin{equation}}
\newcommand{\eeq}{\end{equation}}

\newcommand{\bu}{\bar{u}}

\def\Xint#1{\mathchoice
    {\XXint\displaystyle\textstyle{#1}}%
    {\XXint\textstyle\scriptstyle{#1}}%
    {\XXint\scriptstyle\scriptscriptstyle{#1}}%
    {\XXint\scriptscriptstyle\scriptscriptstyle{#1}}%
    \!\int}
\def\XXint#1#2#3{\setbox0=\hbox{$#1{#2#3}{\int}$}
    \vcenter{\hbox{$#2#3$}}\kern-0.5\wd0}
\def\bint{\Xint-}
\def\bint{\Xint-}
\def\dashint{\Xint{\raise4pt\hbox to7pt{\hrulefill}}}
\def\dashiint{\bint\kern-0.15cm\bint}


\DeclarePairedDelimiter{\abs}{\lvert}{\rvert}
\DeclarePairedDelimiter{\norm}{\lVert}{\rVert}

\DeclareMathOperator*{\essosc}{ess\, osc}
\DeclareMathOperator*{\essinf}{ess\, inf}
\DeclareMathOperator*{\esssup}{ess\, sup}

\def\YYint#1#2#3{{\setbox0=\hbox{$#1{#2#3}{\iint}$}
    \vcenter{\hbox{$#2#3$}}\kern-.51\wd0}}
 


\def\Xint#1{\mathchoice
{\XXint\displaystyle\textstyle{#1}}%
{\XXint\textstyle\scriptstyle{#1}}%
{\XXint\scriptstyle\scriptscriptstyle{#1}}%
{\XXint\scriptscriptstyle\scriptscriptstyle{#1}}%
\!\int}
\def\XXint#1#2#3{{\setbox0=\hbox{$#1{#2#3}{\int}$ }
\vcenter{\hbox{$#2#3$ }}\kern-.6\wd0}}

\def\dashint{\Xint-}

\makeatletter
\newcommand{\leqnomode}{\tagsleft@true}
\newcommand{\reqnomode}{\tagsleft@false}
\makeatother

\title[Doubly non linear regularity ]{H\"older regularity for a class of doubly non linear pdes}

\author[F.M.\ Cassanello, E.\ Henriques ]{Filippo Maria Cassanello, Eurica Henriques}

\address[F.M.\ Cassanello]{Dipartimento di Matematica e Informatica
\newline\indent
Universit\`a degli Studi di Cagliari
\newline\indent
Via Ospedale 72, 09124 Cagliari, Italy}
\email{filippom.cassanello@unica.it}

\address[E. \ Henriques]{Departamento de Matemática
\newline\indent
Universidade de Trás-os-Montes e Alto Douro
\newline\indent
Quinta de Prados 5000-103, Vila Real, Portugal}
\email{eurica@utad.pt}

\subjclass[2020]{35K55, 35B65, 35K65, 35Q35..}
\keywords{H\"older continuity, Doubly non linear equations, Degeneracy and Singularity, Parabolic equations.}

\begin{document}

\begin{abstract}
We prove local H\"older continuity for non negative, locally bounded, local weak solutions to the class of doubly nonlinear parabolic equations
\begin{equation*}
    \partial_t (u^q)- \text{div}(\abs*{Du}^{p-2} Du) = 0, \qquad p>2, \quad 0<q<p-1 .
\end{equation*}
The proof relies on expansion of positivity results combined with the study of an alternative (related to DeGiorgi-type lemmas) and an exponential shift which allows us to  deal with the intrinsic geometry associated to the problem.
\end{abstract}

\maketitle

\section{introduction}
\noindent This paper is devoted to the study of H\"older continuity for the class of doubly non linear parabolic equations 
\begin{equation}
\label{EQ}
    \partial_t (u^q) - \text{div}(\abs*{Du}^{p-2}Du) = 0 \, \quad  \mathrm{in} \ \ \Omega_T , \quad \mathrm{for} \ \ p>2 \ \ \mathrm{and} \ \  \ 0<q<p-1
\end{equation}
where $\Omega_T = \Omega \times (0,T]$ being $\Omega$ a bounded domain of $\R^N$ and $T$ a real positive number.

\vspace{.2cm}

\noindent One of the interests for this class of equations lies on the fact that, based on the values taken by the coefficients $p$ and $q$, it presents, both in time and space, a double degeneracy for $p>2$ and $q>1$, or a double singularity for $1<p<2$ and $0<q<1$; a blend of the two well studied nonlinear parabolic differential equations: the p-Laplace equation and the porous medium equation. In fact, this class includes the diffusion equation, for $p=2$ and  $q=1$; when $p=2$ we get the porous medium equation; while taking $q=1$ we recover the classical $p$-Laplacian. Along side with  the intrinsic mathematical interest, the doubly non linear parabolic equation models, for instance, the turbulent filtration of non-Newtonian fluids through a porous media ( cf. \cite{DT}), dynamic of glaciers and shallow water flows (cf. \cite{BDL} and the references therein).

\vskip 0.3 cm

\noindent In the past years, many contributions have been made to the study of regularity for this class of evolutionary equations, considering the different ranges of the exponents $p$ and $q$. Apart from the classical case $q=1$ (cf. \cite{DiBe}, \cite{DGV}), regularity properties for weak solutions have been proven for the Trudinger's equation, where $q=p-1$, such as Harnack's inequality (cf. \cite{KK}) and H\"older continuity (cf. \cite{KLSU}, \cite{KSU}, \cite{BDL}). In particular, the issue of H\"older regularity in a wider setting has been investigated considering diverse approaches: by making use of energy and logarithmic estimates as well  studying an alternative argument (following quite closely the approach presented in \cite{DiBe} for the p-laplace evolution equation), the H\"older continuity was proved in \cite{Henriques 2020}, for $p>2$ and $0<q<1$; in \cite{Henriques-Laleoglu 2013}, for $1<p<2$ and $q>1$, and  in \cite{PV} for $m\geq 1$ and $p\geq 2$, where the differential equations at hands was taken in the form $ \partial_t (u) - \text{div}(\abs* u^{m-1} |Du|^{p-2}Du) = 0$; by using recent results on the expansion of positivity which rely on an exponential shift (presented in \cite{Henriques 2022}), the H\"older continuity was obtained in \cite{Henriques 2023} within the range $1<p<2$ and $p-1<q<1$; by considering a different approach not relying on any form of exponential shift, the H\"older continuity was proved in \cite{BDLS} for $p>2$ and $0<q<p-1$ and in \cite{LS} for $1<p<2$ and $q>p-1$.
 
\vskip 0.3 cm

\noindent Although the result presented in this work matches the one presented in \cite{BDLS}, it brings a new light to the study of local regularity due to the fact that by using results on the expansion of positivity (presented in \cite{Henriques 2022}), within a intrinsic geometric setting suitable to the exponent range at hands, and studying and alternative argument relying on measure theoretical information provided over cubes (not cylinders) the result follows. In this sense, this work can also be seen as follow up of the previous one \cite{Henriques 2023}. At a certain stage of the proof, our arguments fall into the p-laplace mode and another expansion of positivity (related to this) will be produced. This reasoning, although in a different context, was also used in \cite{Henriques 2023}, which leads us to strongly believe that it maybe a way of finding analogous regularity results completing the route started before. Therefore, the use of the technique explained in this paper could help in balancing the delicate nature of the intrinsic scaling argument, which is needed to compensate the degradation of the equation’s parabolic structure in respect to the heat equation, in the doubly nonlinear framework.

\vskip 0.2 cm

\noindent The main outcome of our work is the following result:
\begin{theorem}
\label{holder}
    Let $u$ be a non negative, locally bounded, local weak solution to \eqref{EQ} in $\Omega_T$. Then, $u$ is locally H\"older continuous in $\Omega_T$. Moreover, there exist constants $b$, $\bar{\gamma} >1$ and $\alpha \in (0,1)$, depending only on the data, such that for every compact set $U_{\tau} \subset \Omega_T$ we have
    \begin{equation}
        \label{eqholder}
        \abs*{u(x_1,t_1)-u(x_2,t_2)} \leq \bar{\gamma} \bar{M} \biggl( \frac{\abs*{x_1-x_2} + \bar{M}^{\frac{p-q-1}{p}} \abs*{t_1-t_2}^{\frac{1}{p}}}{(p,q)-\text{dist}}\biggl)^{\alpha}
    \end{equation}
    for every pair of point $(x_1,t_1)$, $(x_2,t_2) \in U_\tau$, where $\bar{M}= \max\{M,(2b)^{\frac{1}{p-q-1}} ((p,q)-\text{dist})^{\frac{\epsilon_0}{p-q-1}} \}$, $M= ||u||_{L^\infty(\Omega_T)}$ and $(p,q)-\text{dist} =\displaystyle{\inf_{\substack{(y,s) \in \partial_p \Omega \times[0,T]\\ (x,t) \in U_{\tau}}
            } \left\{ \abs*{x-y} + \frac{M^{\frac{p-q-1}{p}}}{(2b)^{\frac{1}{p}}}\abs*{t-s}^{\frac{1}{p}} \right\}}$.
\end{theorem}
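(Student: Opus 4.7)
The plan is to establish the oscillation estimate \eqref{eqholder} via a classical oscillation-reduction scheme adapted to the intrinsic geometry dictated by $p>2$ and $0<q<p-1$. First I would fix a reference point $(x_0,t_0)\in U_\tau$ and work inside intrinsic cylinders of the form $Q = K_\rho(x_0)\times(t_0-c\,\omega^{q-p+1}\rho^p,t_0)$, where $\omega$ is (a bound on) the essential oscillation of $u$ on $Q$. This choice balances the competing scalings of $\partial_t u^q$ and $\mathrm{div}(|Du|^{p-2}Du)$; since $q-p+1<0$ the time length shrinks when $\omega$ grows, which is consistent with the degenerate regime and is what generates the factor $\bar M^{(p-q-1)/p}$ and the $(p,q)$-distance in the statement. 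The quantity $\bar M$ is precisely the one that guarantees the intrinsic cylinders fit inside $\Omega_T$ along the whole iteration.

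Next I would run a DeGiorgi-type alternative, but — following the paper's announced strategy — formulated as measure-theoretic information on cubes $K_\rho$ rather than on full space-time cylinders. Set $\mu^+=\esssup_Q u$, $\mu^-=\essinf_Q u$, and consider the truncations $(u-k)_\pm$ at levels close to $\mu^\pm$. The alternative reads: either there is a time slice on which $|\{u<\mu^-+\omega/2\}\cap K_\rho|$ is small, or else on every relevant time slice the complement is small. Standard energy and logarithmic estimates then transfer this single-slice information into a bulk pointwise bound over a smaller sub-cylinder.

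In the first alternative I would invoke the expansion-of-positivity machinery of \cite{Henriques 2022} via an exponential shift $v = u + a\,e^{-\lambda(t-t_0)}$ (or the analogous shift compatible with the exponents): the shift removes the obstruction caused by $u$ touching zero, linearises the lower-order behaviour of $\partial_t u^q$, and yields a quantitative lower bound for $v$, hence for $u$, on a forward-in-time intrinsic cylinder. In the complementary alternative $u$ is uniformly bounded away from $\mu^-$ on a bulk set; the factor $u^{q-1}$ arising from $\partial_t u^q = qu^{q-1}\partial_t u$ is then comparable to a constant, the equation collapses to a $p$-Laplace-type problem, and a second expansion-of-positivity argument (this time in the $p$-Laplace mode) propagates the pointwise bound to a full intrinsic sub-cylinder. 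In either case one arrives at $\essosc_{Q'} u \le \eta\,\omega$ on a suitable $Q'\subset Q$ with $\eta\in(0,1)$ depending only on the data, and iterating along a geometrically shrinking sequence of intrinsic cylinders yields the required decay, which after un-scaling the intrinsic time gives \eqref{eqholder}.

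The main obstacle, I expect, is the calibration at the transition between the two alternatives: the shift parameters $a,\lambda$, the intrinsic time scale $\theta=\omega^{q-p+1}$, and the truncation levels must be chosen compatibly so that the output of one expansion of positivity is a valid input for the next reduction step, and so that no quantitative constant depends on the running value of $\omega$. A secondary technical difficulty is the upgrade of the cube-wise measure information to cylinder-wise information without spoiling the intrinsic scaling, since standard De Giorgi iteration produces measure-estimates over cylinders — re-deriving the needed inequalities directly over cubes is where the delicacy of the present approach, as opposed to \cite{BDLS}, resides.
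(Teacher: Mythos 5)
Your overall architecture is the same as the paper's: intrinsic cylinders scaled by $\omega^{q+1-p}$, an alternative formulated as measure information on a single time slice of the cube, the expansion-of-positivity machinery of \cite{Henriques 2022} for the branch where $u$ is quantitatively large on half the slice, a ``$p$-Laplace mode'' expansion for the other branch, a calibration of the constants so that whichever branch occurs one gets positivity (hence reduced oscillation) on a common time interval, and then the standard iteration of intrinsic cylinders plus the intrinsic-distance argument yielding \eqref{eqholder}. In that sense the plan is the right one.

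There is, however, one step that fails as literally written. In the complementary alternative (the analogue of \eqref{alt2}) $u$ is \emph{not} ``uniformly bounded away from $\mu^-$ on a bulk set''; the measure information there says the opposite, namely that $\bar{u}=\omega-u\geq\tfrac34\omega$ on at least half the slice, so you cannot conclude on that basis that the weight $u^{q-1}$ is comparable to a constant and that the equation for $u$ itself collapses to a $p$-Laplace problem. The paper's actual mechanism is to switch to $\bar{u}$, which solves \eqref{EQ2}, derive energy estimates for that equation (Lemma \ref{LemmaEE}), propagate the slice information in time (Lemma \ref{SHR2}), and run the expansion of positivity for $\bar{u}$ at \emph{small} truncation levels: it is on the sets $\{\bar{u}<k_j\}$, where $u$ is close to $\omega$, that $(\omega-\bar{u})^{q-1}=u^{q-1}\simeq\omega^{q-1}$ is nondegenerate and the argument falls into $p$-Laplace mode (Proposition \ref{CLU2}); the output is then a bound keeping $u$ away from $\mu^+$, and either branch reduces the oscillation. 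Two further, more minor, corrections: the ``exponential shift'' of \cite{Henriques 2022} is not an additive perturbation $u+a e^{-\lambda(t-t_0)}$ (which does not respect the structure of \eqref{EQ}) but an exponential change of the time variable combined with a multiplicative rescaling of the solution, and it is precisely this that produces the exponential waiting times entering the calibration $a_1<b_2$, $a_2<b_1$ of the two intervals; and no logarithmic estimates are needed, since the slice-to-time-interval transfer is done by the DeGiorgi-type Lemmas \ref{SHR1} and \ref{SHR2}.
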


\vskip 0.2 cm
\noindent
The statement of H\"older continuity may generate some curiosity, as it's form is quite different from other frameworks, especially concerning the ratio of distances. However, this is classical in the literature for doubly non linear equations, even in simpler case as the $p$-Laplacian (cf. \cite{DiBe}). As we previously underlined, the degradation of the equation's parabolic structure impose us to consider proper intrinsic cylinders and, in turn, to modify the geometry of the problem, giving us a different metric in which we have to work with.

\vskip 0.2 cm

\noindent The paper is organized as follows. In Section 2, we present the definition of weak solution, describe the intrinsic geometry in which we are going to work and present  some known results that will be used along the text. In Section 3, we prove DeGiorgi's type lemmas and the expansions of positivity for the measure theoretical alternatives. In section 4, we combine the previous results and present the reduction of oscillation which leads to the proof of the main theorem. In addition, we present in an appendix the proof of the time mollification considered.

\vskip 0.5 cm

\begin{notation}
    Throughout the paper, for all $U \subset \R^N$ we will denote by $\abs*{U}$ the $N$-dimensional measure of Lesbegue of $U$. By $K_\rho (y)$ we will denote the cube of edge $2\rho$ and center at $y$. Writings like $u\leq v$ in $U$ will mean that $u(x) \leq v(x)$ for a.e. $x \in U$. Most important, $\gamma$ will denote several positive constants, only depending on the data $N$, $p$, $q$ and $\Omega$. Moreover, in many results in section 3 we will define generic cylinders $Q$, $\tilde{Q}$; when not indicated in the statement the notation $Q$, $\tilde{Q}$ will refer to the main cylinders defined in section 2.
\end{notation}

\section{Setting the framework}

\noindent This section is devoted to the presentation of the framework within it we will be working on.

\begin{definition}
    A non negative, measurable function $u \in C(0,T;L_{\text{loc}}^{q+1}(\Omega)) \cap L_{\text{loc}}^p (0,T; H_{\text{loc}}^{1,p}(\Omega))$ is a local weak sub (super) solution to equation \eqref{EQ} in $\Omega_T$ if, for any compact $K \subset \Omega$ and for almost every $0<t_1<t_2<T$, it satisfies
    \begin{equation}
        \label{weak-dfn}
        \int_{K} [u^q \varphi (x,t)]_{t_1}^{t_2} \, dx + \int_{t_1}^{t_2} \int_K (\abs*{Du}^{p-2} Du \cdot D \varphi - u^q \varphi_t ) \, dx dt \leq (\geq) 0
    \end{equation}
    for every non negative test function $\varphi \in H_{\text{loc}}^{1,q+1}(0,T;L^{q+1}(K)) \cap L_{\text{loc}}^p (0,T;H_{0}^{1,p}(K)) $.

    \noindent We will say that $u$ is a local weak solution to \eqref{EQ} if it is both a local super and a sub solution.
\end{definition}

\noindent In what follows  we define the intrinsic geometry of the problem, under the assumption that $u$ is locally bounded (we recall that local boundedness for a class of doubly non linear equations such as \eqref{EQ} has been proven in \cite{Henriques-Laleoglu 2017}).

\vspace{.2cm}

\noindent Let $(x_0,t_0)$ be an interior point of $\Omega_T$ and $R$ be a positive real number so that the cylinder
\begin{equation*}
    \tilde{Q}= K_{8R}(x_0) \times (t_0-R^{p-\epsilon_0},t_0) \subset \Omega_T 
\end{equation*}
for some $\epsilon_0 \in (0,1)$ to be chosen; then define
\begin{equation*}
    \mu_{+}= \esssup_{(x_0,t_0)+\tilde{Q}} u \qquad \mu_{-}= \essinf_{(x_0,t_0)+\tilde{Q}} u \qquad \omega= \mu_{+} - \mu_{-}
\end{equation*}
We will assume, given that solutions of \eqref{EQ} are non negative and it is the interesting case, that $\mu_{-} = 0$ so that $\mu_{+}=\omega$.

\vspace{.2cm}

\noindent The major idea towards H\"older continuity is to construct an \textit{intrinsic} cylinder 
\begin{equation*}
    Q=K_R(x_0) \times (t_0-A \omega^{q+1-p} R^p,t_0), \quad \mathrm{for}\ \ \mathrm{some} \ \ A>1\ , 
\end{equation*}
contained in $\tilde{Q}$, within which we have a pointwise information of the solution $u$, {\it{i.e.}}
\[ u \geq \eta \, \omega, \quad in \ \  Q \ , \]
for some $\eta \in (0,1)$ (depending only on the data). To this end, we present the following alternative. 

\noindent Consider a time level $s \in (t_0-R^{p-\epsilon_0},t_0)$, that will be determined later. Then either
\begin{equation}
    \label{alt1}
    \abs*{K_R(x_0) \cap \{u(x,s) > \frac{\omega}{4} \}} \geq \frac{1}{2}\abs*{K_R}
\end{equation}
or
\begin{equation}
    \label{alt2}
    \abs*{K_R(x_0) \cap \{u(x,s) > \frac{\omega}{4} \}} \leq \frac{1}{2}\abs*{K_R}
\end{equation}
We will show that, for each one of these cases, we find the pointwise information stated above.

\vspace{.3cm}

\noindent In order to study each one of the previous alternatives, we will be needing several known results. To make this text as self contained as possible trying to give to the reader an easy experience, in what follows we present them, starting with a DeGiorgi type lemma and a result on the expansion of positivity (both can be found in \cite{Henriques 2022}).

\begin{lemma}
\label{SHR1}
    Let $u$ be a non negative, local weak solution to \eqref{EQ} in $\Omega_T$. Assume that, for some $(y,s) \in \Omega_T$ and some $R > 0$,
    \begin{equation*}
        \abs*{K_R(y) \cap \{u(x,s) > M \}} \geq \alpha \abs*{K_R}
    \end{equation*}
    for some $M>0$ and some $0<\alpha<1$. Then, there exist $\epsilon$, $\delta \in (0,1)$, depending only on $\alpha$ and on $N$, $p$ and $q$, such that
    \begin{equation*}
         \abs*{K_R(y) \cap \{u(x,t) > \epsilon M \}} \geq \frac{\alpha}{2} \abs*{K_R}
    \end{equation*}
    for all $t \in (s, s+ \delta M^{q+1-p}R^p]$.
\end{lemma}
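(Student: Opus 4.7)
The plan is to propagate the measure information forward in time via a logarithmic energy estimate, in the spirit of the classical DiBenedetto scheme adapted to the doubly nonlinear structure.

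Concretely, I would test the weak formulation \eqref{weak-dfn} on the cylinder $K_R(y)\times(s,t)$ with $\varphi(x,\tau)=\Psi'(u(x,\tau))\,\zeta^{p}(x)$, where $\zeta$ is a smooth spatial cutoff equal to $1$ on $K_R(y)$ and supported in a slightly larger cube, while $\Psi$ is a truncated logarithmic function of the form
\begin{equation*}
    \Psi(u) = \Bigl[\ln\Bigl(\tfrac{HM}{HM-(u-M)_{-}+cM}\Bigr)\Bigr]_{+}^{2},
\end{equation*}
with $H<1$ and $c>0$ small, to be chosen. Note that $\Psi\equiv 0$ on $\{u\ge M\}$ and $\Psi(u)\ge \gamma \ln^{2}(1/c)$ on $\{u\le \epsilon M\}$ once $\epsilon$ is taken small enough compared to $H$. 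Since \eqref{EQ} carries $\partial_{t}(u^{q})$ rather than $\partial_{t}u$, the testing must be performed through a Steklov averaging as discussed in the appendix; this produces, in the time variable, the increments of an ``antiderivative''
\begin{equation*}
    \Psi^{*}(u) := \int_{0}^{u} q\,\sigma^{q-1}\,\Psi'(\sigma)\,d\sigma,
\end{equation*}
while the diffusion term yields a nonnegative contribution controlling $|D\Psi^{1/2}|^p$ plus lower order terms absorbed through Young's inequality against $|D\zeta|$.

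The initial measure hypothesis guarantees that $\Psi(u(\cdot,s))$ vanishes on a set of measure at least $\alpha\abs*{K_{R}}$, hence $\int_{K_{R}(y)}\Psi^{*}(u(\cdot,s))\,dx \le \gamma(1-\alpha)\,\ln^{2}(1/c)\,\abs*{K_{R}}$ (up to the $M^{q}$-scaling). Propagating forward through the differential inequality produced by the test-function argument yields
\begin{equation*}
    \int_{K_{R}(y)}\Psi^{*}(u(\cdot,t))\,dx \le \gamma(1-\alpha)\,\ln^{2}(1/c)\,\abs*{K_{R}} + \gamma \,M^{q-1}R^{-p}(t-s)\,\abs*{K_{R}},
\end{equation*}
so choosing $t-s \le \delta M^{q+1-p}R^{p}$ with $\delta$ small keeps the right-hand side controlled. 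The intrinsic scale $M^{q+1-p}R^{p}$ emerges exactly from matching the $M^{q-1}$ factor coming from the time term with the $M^{p-2}R^{-p}$ coming from the $p$-Laplace diffusion. A Chebyshev inequality on the sublevel set then gives
\begin{equation*}
    \abs*{K_{R}(y)\cap\{u(\cdot,t)\le\epsilon M\}} \le \frac{\gamma(1-\alpha)}{\ln(1/c)}\,\abs*{K_{R}},
\end{equation*}
and fixing $c$ small enough that the right-hand side is $\le (1-\tfrac{\alpha}{2})\abs*{K_{R}}$ closes the argument.

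The main obstacle is technical rather than conceptual: rigorously handling the time term when $q\neq 1$. Unlike the heat or $p$-Laplace case, $\partial_{t}(u^{q})$ tested against $\Psi'(u)$ does not collapse to a total derivative of $\Psi(u)$; one recovers instead $\Psi^{*}(u)$ above, and for $q<1$ the factor $\sigma^{q-1}$ is singular at $\sigma=0$, so the mollification procedure and the lower bound $\Psi^{*}(u)\gtrsim \Psi(u)$ on the relevant range of $u$ require careful bookkeeping. Once this is justified (which is precisely the role of the appendix), the rest is a direct energy and propagation computation producing explicit $\epsilon,\delta\in(0,1)$ in terms of the data and of $\alpha$.
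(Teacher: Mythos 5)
Your route (logarithmic estimates in the DiBenedetto style) is not the one the paper relies on: Lemma \ref{SHR1} is quoted from \cite{Henriques 2022} and not re-proved here, and the paper's own twin statement, Lemma \ref{SHR2}, is proved by the \emph{plain} truncated energy estimate of Lemma \ref{LemmaEE} at the single level $k=M$, i.e.\ by two-sided bounds on the weighted function $\tilde{g}_{-}(\cdot,M)=q\int_{\cdot}^{M}(\omega-s)^{q-1}(M-s)\,ds$ together with a Chebyshev argument and the choices of $\sigma,\delta,\epsilon$ as in \eqref{eq9}--\eqref{eq11}; no logarithm enters. A log-based proof can be made to work, but your quantitative bookkeeping has genuine gaps, listed below.

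First, with $H<1$ your function $\Psi$ is not even defined on the relevant range: nothing prevents $u$ from being close to $0$, where $(u-M)_{-}$ is close to $M>HM$ and the argument of the logarithm is negative; one needs $HM\ge \esssup (u-M)_{-}$, i.e.\ essentially $H\ge 1$. Second, and more seriously, the concluding bound $\abs*{K_R\cap\{u(\cdot,t)\le\epsilon M\}}\le \frac{\gamma(1-\alpha)}{\ln(1/c)}\abs*{K_R}$ cannot be correct: with measure information only at the \emph{single} time level $s$, the sublevel set can have measure as large as $(1-\alpha)\abs*{K_R}$ at $t=s$ and, by continuity of $t\mapsto u(\cdot,t)$ in $L^{q+1}_{\mathrm{loc}}$, remains essentially that large for $t$ slightly above $s$; so no choice of $c$ can push the bound below $(1-\alpha)\abs*{K_R}$, let alone make it arbitrarily small. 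What either route actually yields is $\abs*{K_{(1-\sigma)R}\cap\{u(\cdot,t)\le\epsilon M\}}\le \bigl[(1-\alpha)(1+o(1))+\gamma\delta(\cdots)+N\sigma\bigr]\abs*{K_R}\le (1-\tfrac{\alpha}{2})\abs*{K_R}$, where the $o(1)$ is the excess in the ratio between the supremum of the time-weight function and its value at the level $\epsilon M$. Third, this ratio is precisely where $q\neq1$ bites: for your $\Psi^{*}$ the weight $\sigma^{q-1}$ makes the contribution of $(0,\epsilon M)$ comparable to the whole integral when $q<1$ and $c\sim\epsilon$ (as in your sketch), so the ratio stays bounded away from $1$ by a constant depending on $q$ and ``fix $c$ small'' does not close the argument for every $\alpha$; the ratio tends to $1$ only if $c$ is fixed independently of $\epsilon$, at which point $\Psi$ is bounded and the logarithm buys nothing --- you are back to the plain energy-function argument of Lemma \ref{SHR2}, which is the cleaner way to prove the statement.
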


\begin{proposition}
\label{CLU1}
Let $u$ be a non negative, local weak solution to \eqref{EQ} in $\Omega_T$. Assume that 
\begin{equation*}
         \abs*{K_R(y) \cap \{u(x,t) > \epsilon M \}} \geq \frac{\alpha}{2} \abs*{K_R}
    \end{equation*}
    for all $t \in (s, s+ \delta M^{q+1-p}R^p]$ and $(y,s) \in \Omega_T$. Then there exist $\eta \in (0,1)$, $s_1 \in \N$, depending on $N$, $p$, $q$ and $\alpha$, such that
    \begin{equation*}
        u(\cdot, t) \geq \eta M \quad \text{a.e in $K_{2R}(y)$}
    \end{equation*}
     for all $t \in \biggl( s  + \delta \bigl(\frac{M}{4}\bigl)^{q+1-p}e^{\frac{6^p}{\delta} \bigl(\frac{\epsilon}{2^{s_1}} \bigl)^{p-q-1}} R^p, s + \delta \bigl(\frac{M}{4}\bigl)^{q+1-p}e^{\frac{8^p}{\delta}\bigl(\frac{\epsilon}{2^{s_1}} \bigl)^{p-q-1}} R^p\biggl)$.
\end{proposition}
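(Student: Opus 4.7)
The plan is to follow the exponential-shift strategy of Henriques 2022 and obtain the pointwise bound in two stages: first pass from the measure-theoretic lower bound to a pointwise lower bound on a shrunk spatial scale, then propagate that pointwise bound forward in time by means of a comparison with a suitably constructed, exponentially-shifted barrier.

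In the first stage, starting from $\abs*{K_R(y) \cap \{u(\cdot,t) > \epsilon M\}} \ge \frac{\alpha}{2}\abs*{K_R}$ on $(s, s+\delta M^{q+1-p}R^p]$, I would apply logarithmic / Caccioppoli estimates at truncation levels of the form $\epsilon M/2^{j}$ to produce, through a standard shrinking argument, a telescoping bound
\begin{equation*}
\abs*{K_R(y) \cap \{u(\cdot,t) \le \epsilon M/2^{j}\}} \le \frac{\gamma}{j^{\theta}}\, \abs*{K_R},
\end{equation*}
uniformly in $t$, for some $\theta>0$ depending only on the data. Choosing $j = s_{1}$ large enough so that the right-hand side falls below the smallness threshold of a De Giorgi-type lemma (cf.\ Lemma \ref{SHR1}), and applying that lemma on the enlarged cube $K_{2R}(y)$, I obtain the intermediate pointwise estimate
\begin{equation*}
u(\cdot,\tau) \ge \tfrac{1}{2}\,\epsilon M / 2^{s_{1}} \qquad \text{a.e.\ in } K_{2R}(y),
\end{equation*}
for some time $\tau$ close to the right endpoint of the hypothesis interval.

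In the second stage, to extend this positivity forward up to the asserted window, I would build a radial subsolution of \eqref{EQ} of separated-variables form $\Psi(x,t) = \mathcal{A}(t)\,\Phi(x)$, with $\Phi$ a smooth bump supported in a neighbourhood of $K_{2R}(y)$ vanishing on $\partial K_{4R}(y)$ and $\mathcal{A}(t)$ positive decreasing. Plugging $\Psi$ into the equation, the degeneracy $q+1-p<0$ transforms the subsolution test into a scalar ODE of the form $\mathcal{A}'\le -c\,\mathcal{A}^{p-q}/R^p$. The exponential shift enters precisely here: solving the ODE after the change of unknown $\mathcal{B}(t):=\mathcal{A}(t)^{-(p-q-1)}$ gives $\mathcal{B}'\sim c(p-q-1)/R^p$, and it is the subsequent reparametrisation by an integrating factor $e^{-\lambda(t-\tau)}$ (tuned to $\lambda\sim \delta^{-1}(\epsilon/2^{s_{1}})^{p-q-1}$) that produces the exponential time factors $e^{6^p\delta^{-1}(\epsilon/2^{s_{1}})^{p-q-1}}$ and $e^{8^p\delta^{-1}(\epsilon/2^{s_{1}})^{p-q-1}}$; the numerical constants $6^p$ and $8^p$ arise from comparing $\Phi$ on the three concentric cubes $K_{2R}$, $K_{6R}$ and $K_{8R}(y)$. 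Applying the comparison principle on the parabolic boundary of $K_{2R}(y)\times(\tau,T^{\ast})$, initialised with the estimate from the first stage, yields $u\ge\Psi\ge\eta M$ throughout the required time window.

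The main obstacle is the design of the barrier in the regime $0<q<p-1$: since the equation is neither a pure $p$-Laplace flow nor a porous-medium one, the shift parameter $\lambda$ must simultaneously absorb the time-degeneracy generated by $\partial_t u^q$ and compensate for the $p$-Laplace spatial diffusion. Aligning both sides of \eqref{EQ} at the common intrinsic scale $(\epsilon/2^{s_{1}})^{p-q-1}$ is the delicate point; once this is done correctly and $s_{1}$ is taken large enough for the initial data of the barrier to be controlled by the first-stage lower bound, the comparison principle produces the conclusion, with $\eta$ determined by $\Phi$ and $s_{1}$ by the shrinking lemma.
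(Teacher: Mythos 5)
Your plan splits into (i) a measure-to-pointwise step inside the hypothesis interval and (ii) a barrier/comparison step to push positivity forward; both halves have genuine gaps, and the second is a different route from the paper (which does not prove Proposition \ref{CLU1} directly but quotes it from \cite{Henriques 2022}, while proving the exact analogue, Proposition \ref{CLU2}, in full by energy methods with no comparison principle). The decisive problem in your Stage 1 is the intrinsic time-scale mismatch. A De Giorgi-type lemma at the reduced level $\epsilon M/2^{s_1}$ needs measure smallness on a cylinder whose time length is scaled to that level, i.e.\ of order $(\epsilon M/2^{s_1})^{q+1-p}R^p$, which (since $q+1-p<0$) exceeds the available interval $\delta M^{q+1-p}R^p$ by the factor $\delta^{-1}(2^{s_1}/\epsilon)^{p-q-1}$. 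If instead you run the lemma on the given short interval, its critical mass degenerates like a positive power of $\theta k^{p-q-1}=\delta(\epsilon/2^{s_1})^{p-q-1}$, i.e.\ geometrically in $s_1$, whereas your telescoping/logarithmic decay is only algebraic ($\gamma/s_1^{\theta}$); so ``choose $s_1$ large enough to fall below the threshold'' can never close, because the threshold falls faster than the measure does. This is precisely the obstruction the exponential shift removes: in the paper's proof of Proposition \ref{CLU2} the hypothesis is exploited at the whole family of levels $\sigma_0 M$ (lower level on a proportionally longer interval), the change of variables $e^{r}\sim (t-s)/(\delta R^p)$, $v\sim u\,e^{r/(p-q-1)}$ turns this into measure information at a \emph{fixed} level for \emph{all} shifted times, and only then are the isoperimetric lemma (summed over $k_j=k_0/2^j$, giving \eqref{eq22}) and the De Giorgi iteration run, on cylinders intrinsically scaled to $k_{s_2}$, so the critical mass is independent of $s_2$ and the algebraic decay suffices. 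Your proposal keeps the time interval fixed while lowering the level and offers no substitute for this mechanism.

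Stage 2 does not repair this. First, the asserted exponential factors $e^{\frac{6^p}{\delta}(\epsilon/2^{s_1})^{p-q-1}}$ and $e^{\frac{8^p}{\delta}(\epsilon/2^{s_1})^{p-q-1}}$ cannot come from an ``integrating factor'': with $\mathcal{B}=\mathcal{A}^{-(p-q-1)}$ your ODE integrates to pure power-law decay of $\mathcal{A}$, and in the paper these factors arise from undoing the exponential time change at $r=\theta(6R)^p$ and $r=\theta(8R)^p$, not from any barrier ODE. Second, a separated-variables barrier $\Psi=\mathcal{A}(t)\Phi(x)$ has fixed support, so the initial comparison $u(\cdot,\tau)\ge\Psi(\cdot,\tau)$ forces $\Phi$ to vanish outside the set where Stage 1 gives information; at later times you then only get $u\gtrsim\mathcal{A}(t)\Phi$, which tends to zero at the boundary of that set and cannot yield $u\ge\eta M$ a.e.\ on all of $K_{2R}(y)$ (nor can it expand positivity in space, since the support of $\Psi$ never grows). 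Third, the comparison principle for nonnegative weak sub/supersolutions of \eqref{EQ} is nowhere established in the paper and would have to be proved or precisely cited. If you want a proof compatible with the statement as given, the template is the paper's proof of Proposition \ref{CLU2}: apply the hypothesis at the continuum of levels, perform the exponential change of variables, derive the energy estimates for the transformed function, shrink the measure via Lemma \ref{isoperi} over the levels $k_j$, and conclude with the De Giorgi iteration based on Lemmas \ref{sobemb} and \ref{FCL}.
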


\vspace{.1cm}

\noindent Next we present three results that can be found in \cite[Chap.I]{DiBe}: an isoperimetric estimate; a Sobolev embedding and a fast geometric convergence lemma, respectively.
\begin{lemma}
\label{isoperi}
    Let $u \in H^{1,1}(K_R(y)) \cap C(K_R(y))$ and $k$ and $l$ real numbers satisfying $k<l$. There exists a positive constant $\gamma$, depending on $N$ and $p$, such that
    \begin{equation*}
        (l-k) \abs*{K_R(y) \cap \{v >l \}} \leq \frac{\gamma R^{N+1}}{\abs*{K_R(y) \cap \{v<k \}}} \int_{K_R(y) \cap \{k<v<l \}} \abs*{Dv} \, dx
    \end{equation*}
\end{lemma}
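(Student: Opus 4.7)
The plan is to establish this classical De~Giorgi isoperimetric inequality by a mean-value argument along line segments joining points of $\{v<k\}$ to points of $\{v>l\}$, followed by two Fubini/polar-coordinate swaps. The constant should really depend only on $N$; the presence of $p$ in the statement simply reflects the paper's convention that constants may tacitly depend on the structural data.

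First I would establish the pointwise driver. Fix $x\in K_R\cap\{v>l\}$ and $y\in K_R\cap\{v<k\}$, set $\omega=(x-y)/|x-y|$, and consider $g(t):=v(y+t\omega)$. Since $v\in H^{1,1}\cap C$, $g$ is absolutely continuous in $t$ for a.e.\ direction $\omega$. The truncation $\tilde g=\min\{l,\max\{k,g\}\}$ then satisfies $\tilde g(|x-y|)-\tilde g(0)=l-k$ and $\tilde g'=g'\mathbf{1}_{\{k<g<l\}}$ a.e., so
\begin{equation*}
l-k \;\leq\; \int_0^{|x-y|}|\nabla v(y+t\omega)|\,\mathbf{1}_{\{k<v<l\}}(y+t\omega)\,dt.
\end{equation*}

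Next I would perform the first Fubini swap. Integrating in $x$ over $K_R\cap\{v>l\}$, extending the domain to the ball $B_D(y)$ with $D=2\sqrt{N}\,R$, writing $x=y+r\sigma$, interchanging the $r$- and $t$-integrals (and using $\int_t^D r^{N-1}\,dr\le D^N/N$), and finally returning to Cartesian $z=y+t\omega$ at the cost of the Jacobian $|z-y|^{-(N-1)}$ produces
\begin{equation*}
(l-k)\,|K_R\cap\{v>l\}| \;\leq\; \frac{D^N}{N}\int_{K_R\cap\{k<v<l\}}\frac{|\nabla v(z)|}{|z-y|^{N-1}}\,dz.
\end{equation*}
I would then integrate in $y\in K_R\cap\{v<k\}$, swap the order of integration, and estimate, for each fixed $z\in K_R$,
\begin{equation*}
\int_{K_R\cap\{v<k\}}|z-y|^{-(N-1)}\,dy \;\leq\; \int_{B_D(z)}|z-y|^{-(N-1)}\,dy \;\leq\; c(N)\,R,
\end{equation*}
which is absolutely convergent because the singularity has order $N-1<N$. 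Dividing by $|K_R\cap\{v<k\}|$ delivers the stated inequality with $\gamma=\gamma(N)$.

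The only genuinely delicate point is Step~1: one has to justify absolute continuity of $v$ along almost every line segment, which for $H^{1,1}\cap C$ follows from the ACL characterisation of Sobolev functions. In practice it is often simpler to derive the inequality first for smooth $v$ and then pass to the limit via mollification, using Fatou on the right-hand side and dominated convergence on the left. The rest of the argument is pure bookkeeping with Fubini and the singular-integral estimate above.
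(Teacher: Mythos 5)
Your argument is correct, and it is essentially the classical proof of this De Giorgi isoperimetric lemma: the paper itself offers no proof, quoting the result from DiBenedetto's book, and the proof there proceeds exactly as you do — truncation of $v$ between $k$ and $l$, the mean-value inequality along segments joining $\{v<k\}$ to $\{v>l\}$, two Fubini/polar-coordinate swaps, and the Riesz-potential bound $\int_{B_D(z)}|z-y|^{1-N}\,dy\le c(N)R$. Your closing remark about handling the ACL subtlety by first proving the inequality for smooth functions and passing to the limit is the right fix, and your observation that $\gamma$ depends only on $N$ is also accurate.
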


\begin{lemma}
    \label{sobemb}
    Let $\Omega \subset \R^N$ be a bounded set, $T$  a real number and $Q = \Omega \times (0,T)$. Moreover, let $m \in \N$, $p>1$ be a real number, and $u \in L^{\infty}(0,T; L^m(\Omega)) \cap L^p (0,T; W_0^{1,p}(\Omega))$, then there exist a constant $\gamma$, depending only on $p$, $N$ and $m$, such that
    \begin{equation*}
        \iint_{Q} \abs*{v}^{p\frac{N+m}{N}} \, dx dt \leq \gamma \biggl(\iint_{Q}\abs*{Dv}^p \, dx dt \biggl) \times \biggl( \sup_{0 \leq t \leq T}\int_{\Omega} \abs*{v}^m \, dx  \biggl)^{\frac{p}{N}}
    \end{equation*}
\end{lemma}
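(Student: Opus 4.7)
The plan is to reduce the claimed space--time estimate to a pointwise-in-$t$ Gagliardo--Nirenberg inequality on $\Omega$, and then integrate in time. First, I would fix a time level $t \in (0,T)$, working on the set of full measure on which $v(\cdot,t) \in W_0^{1,p}(\Omega) \cap L^m(\Omega)$, and split the integrand as
\begin{equation*}
\abs{v}^{p\frac{N+m}{N}} = \abs{v}^p \cdot \abs{v}^{\frac{pm}{N}}.
\end{equation*}
Applying H\"older's inequality in $x$ with conjugate exponents $\frac{N}{N-p}$ and $\frac{N}{p}$ (first in the subcritical range $1<p<N$), one obtains
\begin{equation*}
\int_\Omega \abs{v}^{p\frac{N+m}{N}}\,dx \leq \left( \int_\Omega \abs{v}^{\frac{Np}{N-p}}\,dx\right)^{\frac{N-p}{N}} \left(\int_\Omega \abs{v}^m\,dx \right)^{\frac{p}{N}}.
\end{equation*}
Setting $p^* = \frac{Np}{N-p}$, the choice of exponents is engineered so that the first factor equals $\bigl(\int_\Omega \abs{v}^{p^*}\,dx\bigr)^{p/p^*}$; the Sobolev embedding $W_0^{1,p}(\Omega) \hookrightarrow L^{p^*}(\Omega)$ then bounds this by $\gamma \int_\Omega \abs{Dv}^p \,dx$ with $\gamma=\gamma(N,p)$.

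Second, I would integrate the resulting pointwise-in-$t$ estimate
\begin{equation*}
\int_\Omega \abs{v(\cdot,t)}^{p\frac{N+m}{N}}\,dx \leq \gamma \left(\int_\Omega \abs{Dv(\cdot,t)}^p\,dx\right) \left(\int_\Omega \abs{v(\cdot,t)}^m\,dx\right)^{\frac{p}{N}}
\end{equation*}
over $(0,T)$, pulling the $L^m$-factor out of the time integral as an essential supremum in $t$. This produces precisely the stated double integral bound.

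The critical and supercritical regimes $p\geq N$ require only cosmetic adjustments: for $p=N$ the embedding $W_0^{1,N}\hookrightarrow L^q$ holds for every finite $q$ and the same H\"older split goes through (with $N/p$ replaced by a slightly larger exponent), while for $p>N$ one has $W_0^{1,p}\hookrightarrow L^\infty$, which trivialises the first factor. I expect the only mildly delicate point to be the exponent bookkeeping, verifying that the H\"older and Sobolev exponents combine to deliver precisely the homogeneity $p(N+m)/N$ on the left and the factorised structure on the right; this amounts to Gagliardo--Nirenberg interpolation between $L^m$ and $L^{p^*}$ with parameter $\theta = \frac{N}{N+m}$, and since the statement is quoted verbatim from \cite{DiBe}, no arguments beyond this routine algebraic check are required.
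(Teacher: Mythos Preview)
Your argument is correct and is precisely the standard proof of this parabolic Gagliardo--Nirenberg embedding: H\"older in space to split off the $L^m$ factor, Sobolev $W_0^{1,p}\hookrightarrow L^{p^*}$ on the remaining factor, then integrate in $t$ and take the supremum. Note, however, that the paper does not actually prove Lemma~\ref{sobemb}; it is listed among the preliminary results quoted from \cite[Chap.~I]{DiBe}, so there is no ``paper's own proof'' to compare against --- your write-up simply reproduces the classical argument found there.
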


\begin{lemma}
    \label{FCL}
    Let $(Y_n)_n$ be a sequence of non negative numbers satisfying
    \begin{equation*}
        Y_{n+1} \leq C b^n Y_n^{1+\alpha}
    \end{equation*}
    where $C$, $b>1$ and $\alpha \in (0,1)$. Then $(Y_n)_n$ converges to 0 as $n \to \infty$, provided
    \begin{equation*}
        Y_0 \leq C^{-1/\alpha} b^{-1/\alpha^2} \, .
    \end{equation*}
\end{lemma}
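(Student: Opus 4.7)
The target is the numerical iteration Lemma \ref{FCL}, which is a purely algebraic statement about a scalar recursion $Y_{n+1}\le C b^n Y_n^{1+\alpha}$. My plan is to prove it by induction, proving the stronger quantitative decay
\[
Y_n \le Y_0 \, b^{-n/\alpha}\qquad\text{for every }n\ge 0,
\]
which will immediately give $Y_n\to 0$ since $b>1$ and $-1/\alpha<0$. This sharp geometric rate is the natural ansatz because it is the unique exponent of the form $Y_n\le Y_0 b^{-\beta n}$ that is preserved by the recursion when the smallness assumption on $Y_0$ holds with equality.

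The base case $n=0$ is trivial. For the inductive step, assume $Y_n\le Y_0 b^{-n/\alpha}$; then using the recursion,
\[
Y_{n+1}\le C\, b^n\, Y_n^{1+\alpha}\le C\, b^n\, Y_0^{1+\alpha}\, b^{-n(1+\alpha)/\alpha}=C\, Y_0^{1+\alpha}\, b^{-n/\alpha}.
\]
To close the induction I must check $C\, Y_0^{1+\alpha}\, b^{-n/\alpha}\le Y_0\, b^{-(n+1)/\alpha}$, which after cancelling $Y_0\,b^{-n/\alpha}$ simplifies to $C\, Y_0^{\alpha}\le b^{-1/\alpha}$, i.e. $Y_0\le C^{-1/\alpha} b^{-1/\alpha^2}$. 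This is exactly the standing hypothesis, so the induction goes through.

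Once the bound $Y_n\le Y_0 b^{-n/\alpha}$ is established for all $n$, convergence $Y_n\to 0$ follows from $b>1$ and $1/\alpha>0$. There is no real obstacle here beyond selecting the right decay exponent: the point of the lemma is that the threshold $Y_0\le C^{-1/\alpha}b^{-1/\alpha^2}$ is precisely what is needed to make the induction self-consistent at the optimal geometric rate, and everything reduces to this single algebraic verification.
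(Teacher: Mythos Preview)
Your proof is correct and is precisely the standard induction argument for this fast geometric convergence lemma. The paper does not actually prove this statement but merely cites \cite[Chap.~I]{DiBe}, where the same induction yielding $Y_n\le Y_0\,b^{-n/\alpha}$ is carried out; so your approach coincides with the referenced one.
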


\section{The study of the two alternatives}

\subsection{The first alternative}
Assume that \eqref{alt1}, then
\begin{equation}
    \label{alt1mod}
    \abs*{K_R(x_0) \cap \{u(x,s) > \sigma_0 \frac{\omega}{4} \}} \geq \frac{1}{2}\abs*{K_R} \qquad \forall \, \sigma_0 \in (0,1] \ .
\end{equation}
We now apply lemma \ref{SHR1} to \eqref{alt1mod}, with $M = \sigma_0 \frac{\omega}{4}$ and $\alpha= \frac{1}{2}$, to obtain
\begin{equation}
\label{alt1bet}
    \abs*{K_R (x_0) \cap \{u(x,t) >\epsilon_1 \sigma_0 \frac{\omega}{4} \}} \geq \frac{\omega}{4} \abs*{K_R} , \quad \forall \, t \in (s, s+ \delta_1 \bigl(\frac{\omega}{4} \sigma_0 \bigl)^{q+1-p} R^p] \ . 
\end{equation}
and then Proposition \ref{CLU1} allows to get
\begin{equation}
    \label{alt1inf}
    u(\cdot,t)> \eta_1 \frac{\omega}{4} \quad \text{a.e. in $K_{2R}(x_0)$}
\end{equation}
for all $t \in \biggl( s  + \delta_1 \bigl(\frac{\omega}{4}\bigl)^{q+1-p}e^{\frac{6^p}{\delta}b_1^{p-q-1}} R^p, s + \delta_1 \bigl(\frac{\omega}{4}\bigl)^{q+1-p}e^{\frac{8^p}{\delta_1}b_1^{p-q-1}} R^p\biggl)$.

\noindent This is the simplest case to be treated. The more evolved one concerns the study of \eqref{alt2}.

\subsection{The second alternative}
If  \eqref{alt1} does not hold, then we are left with \eqref{alt2}. From this we arrive at
\[ \abs*{K_R(x_0) \cap \{\omega - u(x,s) \geq \frac{3 }{4} \omega \}} \geq \frac{1}{2}\abs*{K_R}\]
and then, by setting $\bar{u} = \omega - u$,
\begin{equation}\label{infobaru}
\abs*{K_R(x_0) \cap \{\bar{u}(x,s) \geq \sigma_0 \frac{3 }{4} \omega \}} \geq \frac{1}{2}\abs*{K_R}, \qquad \forall \ \ 0 \leq \sigma_0 \leq 1 \ .
\end{equation}

\noindent Our next purpose is to study the behaviour of $\bar{u}$. Observe that, since $0 \leq u \leq \omega$ in $\tilde{Q}$ we get that $\bar{u} \geq 0$ in $\tilde{Q}$ is a weak solution to the new differential equation 
\begin{equation}
    \label{EQ2}
    \partial_t(\omega - \bar{u})^q + \text{div}(\abs*{D\bar{u}}^{p-2}D\bar{u})=0 \ .
\end{equation}

\noindent We have the following lemma which is a re-phrasal of energy estimates for solutions to equation \eqref{EQ2}
\begin{lemma}
\label{LemmaEE}
    Let $\bu$ be a non negative, local weak solution of \eqref{EQ2} in $\Omega_T$. There exist a positive constant $C$, depending only on the data $N$, $p$, $q$, such that for every cylinder $ Q= K\times(t_1,t_2)$ compactly contained in $ \Omega_T$, every real number $k \in \R$ and every cut-off function $\xi \in C^1_0 (Q)$ vanishing on the lateral boundary of $Q$ one has
    \begin{equation}
    \begin{aligned}
        \label{EE}
        &\sup_{ t_1 \leq \tau \leq  t_2} \int_{K} \tilde{g}_{-} (\bu, k) \xi^p \, dx + \frac{1}{2} \int_{t_1}^{t_2} \int_K \abs*{D(\bu -k)_{-}}^p \xi^p \, dx d\tau \\ & \leq \int_{K \times \{ t_1 \}} \tilde{g}_{-}(\bu,k) \xi^p \, dx + \int_{t_1}^{t_2} \int_{K} \tilde{g}_{-} (\bu,k) \partial_\tau \xi^p \, dx d\tau  + C \int_{t_1}^{t_2} \int_{K} (\bu-k)_{-}^p \abs*{D\xi}^{p} \, dx d\tau
        \end{aligned} 
    \end{equation}
    where $\displaystyle{\tilde{g}_{-}(\bu,k)= q \int_{\bu}^k (\omega-s)^{q-1}(k-s) \, ds}$.
\end{lemma}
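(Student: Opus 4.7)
\medskip

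\noindent\textbf{Proof plan for Lemma \ref{LemmaEE}.} The strategy is the classical one: insert a truncation of $\bu$ at level $k$, multiplied by a power of the cutoff, into the weak formulation of \eqref{EQ2} and split the resulting identity into a time part and a diffusion part. Because the time derivative in \eqref{EQ2} acts on $(\omega-\bu)^q$ while the test function is built from $\bu$ itself, the quantity $\tilde g_-(\bu,k)$ is exactly the anti-derivative that appears when one transfers the derivative onto the test function. Since $\partial_t\bu$ does not, a priori, exist pointwise, I would first mollify the equation in time (Steklov averages, or the mollification discussed in the paper's appendix), perform the computation for the regularised solution, and then pass to the limit; I will suppress this regularisation in what follows.

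\medskip

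\noindent The test function is $\varphi=-(\bu-k)_{-}\,\xi^{p}$, which is admissible because it vanishes on the lateral boundary of $Q$ and has the required integrability. On $\{\bu<k\}$ one has $D[(\bu-k)_{-}]=-D\bu$, so
\[
D\varphi = D\bu\,\xi^{p}-(\bu-k)_{-}\,p\,\xi^{p-1}D\xi.
\]
Plugging $\varphi$ into the weak formulation of \eqref{EQ2} yields the identity
\[
\mathcal T + \mathcal D = 0,\qquad
\mathcal T:=\!\int_{t_1}^{t_2}\!\!\int_K\partial_\tau(\omega-\bu)^{q}\,\varphi\,dx\,d\tau,\qquad
\mathcal D:=\!\int_{t_1}^{t_2}\!\!\int_K|D\bu|^{p-2}D\bu\cdot D\varphi\,dx\,d\tau.
\]

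\medskip

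\noindent For the diffusion part, on $\{\bu<k\}$ we have $|D\bu|^p=|D(\bu-k)_{-}|^p$, hence
\[
\mathcal D = \!\int_{t_1}^{t_2}\!\!\int_K|D(\bu-k)_{-}|^{p}\xi^{p}\,dx\,d\tau
 - p\!\int_{t_1}^{t_2}\!\!\int_K(\bu-k)_{-}\,\xi^{p-1}|D\bu|^{p-2}D\bu\cdot D\xi\,dx\,d\tau.
\]
Young's inequality applied to the cross term with parameter $1/2$ absorbs the leading order into $|D(\bu-k)_{-}|^{p}\xi^{p}$ and leaves a remainder of the form $C\,(\bu-k)_{-}^{p}|D\xi|^{p}$, which is exactly the last term on the right hand side of \eqref{EE}.

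\medskip

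\noindent For the time part, the key observation is that along $\bu$ one has
\[
\frac{d}{d\tau}\,\tilde g_{-}(\bu,k)\;=\;-q\,(\omega-\bu)^{q-1}(k-\bu)\,\partial_\tau\bu
\;=\;\partial_\tau(\omega-\bu)^{q}\cdot\bigl[-(\bu-k)_{-}\bigr],
\]
since on $\{\bu<k\}$ we have $(\bu-k)_{-}=k-\bu$ and $\partial_\tau(\omega-\bu)^{q}=-q(\omega-\bu)^{q-1}\partial_\tau\bu$, while both sides vanish on $\{\bu\ge k\}$. Therefore
\[
\mathcal T \;=\;\int_{t_1}^{t_2}\!\!\int_K \frac{d}{d\tau}\bigl[\tilde g_{-}(\bu,k)\bigr]\,\xi^{p}\,dx\,d\tau
\;=\; \int_K\!\bigl[\tilde g_{-}(\bu,k)\,\xi^{p}\bigr]_{t_1}^{t_2}dx \;-\;\int_{t_1}^{t_2}\!\!\int_K \tilde g_{-}(\bu,k)\,\partial_\tau\xi^{p}\,dx\,d\tau,
\]
after an integration by parts in time. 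Instead of using the full interval, I would cut off at an arbitrary $\tau\in(t_1,t_2)$ by multiplying by a Lipschitz temporal cutoff $\chi_\tau$ that is identically $1$ on $[t_1,\tau-h]$ and linearly decreasing to $0$ on $[\tau-h,\tau]$, then let $h\to 0$. This converts the $t_2$ boundary term into $\int_K \tilde g_{-}(\bu(\cdot,\tau),k)\,\xi^{p}(\cdot,\tau)\,dx$, and taking the supremum over $\tau$ produces the first term on the left of \eqref{EE}. Combining $\mathcal T+\mathcal D=0$ with the Young absorption and rearranging gives \eqref{EE}.

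\medskip

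\noindent The only real subtlety is making the chain-rule identity for $\tilde g_{-}$ rigorous, which is the reason for the time regularisation step; once Steklov averages are inserted, the computation above is linear in the time derivative and passes to the limit by the standard convergence results for weak solutions of doubly nonlinear equations, as detailed in the appendix.
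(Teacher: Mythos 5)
Your plan is, in substance, the paper's own proof: test the time-mollified weak formulation of \eqref{EQ2} with the truncation $(\bu-k)_{-}$ times $\xi^p$ and a temporal cutoff, recognise the time term as $\partial_\tau \tilde g_{-}(\bu,k)$, integrate by parts in time to generate the boundary term at an arbitrary $\tau$ together with the $\tilde g_{-}\,\partial_\tau\xi^p$ term, and absorb the diffusion cross term by Young's inequality with constant $\tfrac12$; taking the supremum over $\tau$ then gives \eqref{EE}. Two caveats. First, your sign bookkeeping is internally inconsistent, although the two slips cancel: with your definitions the weak formulation of \eqref{EQ2} gives $\mathcal T-\mathcal D=0$, not $\mathcal T+\mathcal D=0$, and the chain rule reads $\partial_\tau\tilde g_{-}(\bu,k)=\partial_\tau(\omega-\bu)^q\cdot\bigl(+(\bu-k)_{-}\bigr)$, so with your choice $\varphi=-(\bu-k)_{-}\xi^p$ one has $\mathcal T=-\iint\partial_\tau\tilde g_{-}\,\xi^p$. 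The combined identity you arrive at is the correct one (and agrees with the paper, which simply takes $\phi=(\bu-k)_{-}\xi^p\psi_\epsilon$), but each displayed identity, as written, is off by a sign.

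Second, the step you defer — ``the computation is linear in the time derivative and passes to the limit by standard convergence'' — is precisely where the paper's proof has its only non-routine content, and it is not a matter of linearity. After mollification the time term is $\iint \partial_\tau[(\omega-\bu)^q]_h\,(\bu-k)_{-}\xi^p\psi_\epsilon$, and since the averaging acts on $(\omega-\bu)^q$ while the truncation is built from the unmollified $\bu$, this product is \emph{not} an exact time derivative. The paper introduces $v_h$ with $(\omega-v_h)^q=[(\omega-\bu)^q]_h$, writes the integrand as $\partial_\tau\tilde g_{-}(v_h,k)\,\xi^p\psi_\epsilon$ plus the discrepancy $h^{-1}\bigl((\omega-\bu)^q-(\omega-v_h)^q\bigr)\bigl((\bu-k)_{-}-(v_h-k)_{-}\bigr)\xi^p\psi_\epsilon$, discards the discrepancy by a monotonicity (sign) argument, and only then lets $h\to0$, identifying $\lim_{h\to0}\tilde g_{-}(v_h,k)=\tilde g_{-}(\bu,k)$ through the substitution $s'=(\omega-s)^q$. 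The appendix justifies only the mollified formulation itself, not this chain-rule step, so the monotonicity argument must still be supplied; once it is, your argument coincides with the paper's.
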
 
\begin{proof}
     The definition of weak solution for equation \eqref{EQ2} gives us that, for every $\phi \in H_{\text{loc}}^{1,q+1} (0,T;L^{q+1}(K)) \cap L_{\text{loc}}^p(0,T;H_{0}^{1,p}(\Omega)) $,
    \begin{equation*}
        \int_{K} (w -\bu)^{q}(x,\tau) \phi(x,\tau) \, dx\biggl|_{t_1}^{t_2} - \int_{t_1}^{t_2} \int_K \biggl( \abs*{ D\bu}^{p-2} D\bu D\phi+ (\omega - \bu)^{q} \phi_{\tau} \, dx d\tau \biggl) = 0
    \end{equation*}

    \noindent The mollified version becomes (see the appendix for details), for all $\phi \in L_{\text{loc}}^p(0,T;H_{0}^{1,p}(\Omega)) $
    \begin{equation}
    \label{mollified1}
        \begin{aligned}
            & \int_{0}^{T} \int_K \partial_{\tau} [(\omega - \bu)^{q}]_{h}(x,\tau) \phi(x,\tau) \, dx d\tau - \int_{0}^{T} \int_K \biggl[ (D\bu)^{p-1}\biggl]_h D\phi \, dx d\tau \leq \\
            &\int_K (\omega - \bu)^q (x,0) \biggl( \int_0^T \frac{\phi(x,s)e^{-\frac{s}{h}}}{h} \, ds \biggl)\, dx =: P+ E \leq R
        \end{aligned}
    \end{equation}
    where we are considering $\phi = (u-k)_{-}\xi^p \psi_{\epsilon}$ where $\xi$ is as the lemma request and 
    \begin{equation*}
        \psi_{\epsilon}(\tau) = 
        \begin{cases}
            0 & \quad 0 \leq \tau \leq t_1 - \epsilon \\
            1 + \frac{\tau-t_1}{\epsilon} & \quad t_1 - \epsilon \leq \tau \leq t_1 \\
            1 & \quad t_1 \leq \tau \leq  \\
            1 - \frac{\tau-t}{\epsilon} & \quad t \leq \tau \leq t+ \epsilon \\
            0 & \quad t + \epsilon \leq \tau \leq T
        \end{cases}
    \end{equation*}
    for every $t \in (t_1,t_2)$. We now estimate each one of these integrals. Starting with $P$, consider $v_h$ a function so that $(\omega -v_h)^q = [(\omega - \bu)^q]_h$, then
    \begin{equation}
    \label{eq2}
    \begin{aligned}
        P
        = & \int_0^T \int_K \partial_\tau (\omega -v_h)^q(x,\tau) \phi(x,\tau) \, dx d\tau \\
        = & \int_0^T \int_K \partial_\tau (\omega-v_h)^q \xi^p \psi_{\epsilon} (v_h-k)_{-} \, dx d\tau  \\
        & + \int_0^T \int_K \partial_\tau (\omega -v_h)^q ( (u-k)_{-} - (v_h -k)_{-}) \xi^p \psi_{\epsilon} \, dx d\tau \\
        = &  \int_0^T \int_K \partial_\tau (\tilde{g}_{-} (v_h,k)) \xi^p \psi_{\epsilon} \, dx d\tau \\
        &+ \int_0^T \int_K \frac{(\omega - \bu)^q - (\omega - v_h)^q}{h} ((u-k)_{-} - (v_h - k)_{-})) \xi^p \psi_{\epsilon} \, dx d\tau
        \end{aligned}
    \end{equation}
   In the last line, we have use property (i) of Proposition \ref{propmol} and the fact that
    \begin{equation*}
        \begin{aligned}
          \partial_{\tau}(\omega - v_h)^q (v_h-k)_{-} &= q(\omega-v_h)^{q-1} \partial_{\tau}(\omega-v_h)(v_h -k)_{-} \\
          &= \partial_{\tau} q \int_{v_h}^k (\omega-s)^{q-1}(k-s) \, ds = \partial_{\tau} \tilde{g}_{-} (v_h,k)
        \end{aligned}
    \end{equation*}
    We now observe that on the set $\{\bu>v_h \}$,  $(\omega-\bu)^q < (\omega-v_h)^q$ while $0>k-\bu - (k-v_h)$, so the second integral in \eqref{eq2} is positive. In the similar fashion, on $\{\bu<v_h\}$ we get $(\omega-\bu)^q > (\omega-v_h)^q$ while $0<k-\bu - (k-v_h)$. So we arrive at
    \begin{equation}
    \begin{aligned}
        \label{eq3}
       P  \geq & \int_{0}^T \int_K \partial_\tau (\tilde{g}_{-} (v_h,k)) \xi^p \psi_{\epsilon} \, dx d\tau  \\
        = & - \int_0^T \int_K \tilde{g}_{-}(v_h,k) \partial_{\tau} \xi^p \psi_{\epsilon} \, dx d\tau - \int_0^T \int_K \tilde{g}_{-} (v_h,k) \xi^p \partial_{\tau} \psi_{\epsilon} \, dx d\tau \, .
        \end{aligned}
    \end{equation}
    Finally we consider the change of variables $s' = (\omega - s)^q$ and let $h\to 0^{+}$. By Proposition \ref{propmol} (ii), we get $(\omega-v_h)^q =[(\omega-u)^q]_h \to (\omega -u)^q$ in $L^{\frac{q+1}{q}}(\Omega_T)$ (notice that $(\omega-u)^q \in L^{\frac{q+1}{q}}$ since $u \in L^{q+1}$) and 
    \begin{equation*}
        \lim_{h \to 0^{+}}\tilde{g}_{-}(v_h,k) = \lim_{h \to 0^{+}} \int_{(\omega -k)^q}^{(\omega -v_h)^q} (s'^{\frac{1}{q}}+k-\omega) \, ds' = \int_{(w-k)^q}^{(\omega-\bu)^q} (s'^{\frac{1}{q}}+k-\omega)\, ds' = \tilde{g}_{-}(\bu,k)
    \end{equation*}
    so, by taking $h\to 0^{+}$ in \eqref{eq3}, we arrive at
    \begin{eqnarray*}
        & & - \int_0^T \int_K \tilde{g}_{-}(\bu,k) \partial_{\tau} \xi^p \psi_{\epsilon} \, dx d\tau - \int_0^T \int_K \tilde{g}_{-} (\bu,k) \xi^p \partial_{\tau} \psi_{\epsilon} \, dx d\tau \\
        & = & - \int_{t_1}^{t} \int_K \tilde{g}_{-}(\bu,k) \partial_{\tau} \xi^p \, dx d\tau - \frac{1}{\epsilon}\int_{t_1 - \epsilon}^{t_1} \int_K \tilde{g}_{-} (\bu,k) \xi^p  \, dx d\tau - \frac{1}{\epsilon}\int_{t}^{t + \epsilon} \int_K \tilde{g}_{-} (\bu,k) \xi^p  \, dx d\tau      
    \end{eqnarray*}
   since 
    \begin{equation*}
        \partial_{\tau} \psi_{\epsilon} =
        \begin{cases}
            \frac{1}{\epsilon}, \quad t_1-\epsilon \leq \tau \leq t_1 \\
            - \frac{1}{\epsilon}, \quad t \leq \tau \leq t + \epsilon \ ,
        \end{cases}
    \end{equation*}   
    \noindent and finally, by letting $\epsilon \rightarrow 0^+$, have
    \begin{equation}
        - \int_{t_1}^{t} \int_K \tilde{g}_{-}(\bu,k) \partial_{\tau} \xi^p \, dx d\tau \, + \int_K \tilde{g}_{-} (\bu,k) \xi^p  \, dx \biggl|_{t_1}^{t}
    \end{equation}

    \noindent Now we take a look at $R$. Firstly, we observe that
    \begin{equation*}
        \int_0^T \frac{\phi(x,s)e^{-\frac{s}{h}}}{h} \, ds = \int_0^{\frac{T}{h}} \frac{\phi(x,\tau h)e^{-\tau}}{h} \, d\tau \longrightarrow \phi(x,0) \int_0^{\infty} e^{-\tau} \, d\tau = \phi(x,0), \quad h \to 0^+
    \end{equation*}
    and secondly, since  by construction $\psi_{\epsilon}(0) = 0$, we get
    \begin{equation*}
        \int_K (\omega-u)^q(x,0) \phi(x,0) = 0 \, .
    \end{equation*}

    \noindent Finally we address $E$. Observe that, since $Du \in L^p$ and $u \in L_{\text{loc}}^p (0,T; W_{\text{loc}}^{1,p}(\Omega))$, $(D\bu)^{p-1} \in L^{\frac{p}{p-1}}$ and then by making use of Proposition \ref{propmol} (iii)
    \begin{equation*}
        -\int_0^T \int_K [ (D\bu)^{p-1}]_h D\phi \, dx d\tau \longrightarrow  - \int_0^T \int_K (D\bu)^{p-1} D\phi \, dx d\tau \, \mathrm{as} \ \ h \to 0^+ \ .
    \end{equation*}
From this, by taking the space derivatives of the test function, we arrive at
    \begin{equation}
    \label{eq5}
    \begin{aligned}
         &- \int_0^T \int_K (D\bu)^{p-1}  D(\bu-k)_{-} \xi^p \psi_{ \epsilon}\, dx d\tau - p\int_0^T \int_K (D\bu)^{p-1} (\bu-k)_{-} D\xi \, \xi^{p-1} \psi_{ \epsilon}\, dx d\tau \\
        \xrightarrow{\epsilon \to 0^{+}} & \int_{t_1}^{t} \int_K \abs*{D(\bu-k)_{-}}^{p} \xi^p - p (D\bu)^{p-1} (\bu-k)_{-} D\xi \, \xi^{p-1} \, dx d\tau  \ .
        \end{aligned}
    \end{equation}
    Young's inequality gives
    \begin{equation}
        \label{eq6}
        \begin{aligned}
            &-\int_{t_1}^{t} \int_K p(D\bu)^{p-1} (\bu-k)_{-} D\xi \, \xi^{p-1} \, dx d\tau = \int_{t_1}^{t} \int_K p(D(\bu-k)_{-})^{p-1} (\bu-k)_{-} D\xi \, \xi^{p-1} \, dx d\tau \\
            \geq & - \int_{t_1}^{t} \int_K p\abs*{D(\bu-k)_{-}}^{p-1} (\bu-k)_{-} \abs*{D\xi} \, \xi^{p-1} \, dx d\tau \\
            \geq & -\frac{1}{2} \int_{t_1}^{t} \int_K \abs*{D(\bu-k)_{-}}^{p} \xi^p \, dx d\tau - 2^{p-1} p^p \int_{t_1}^{t} \int_K (\bu-k)_{-}^p \abs*{D\xi}^p \, dx d\tau \, 
        \end{aligned}
    \end{equation}
    thereby
    \begin{equation*}
        E \geq \frac{1}{2} \int_{t_1}^{t} \int_K \abs*{D(\bu-k)_{-}}^{p} \xi^p \, dx d\tau - 2^{p-1} p^p \int_{t_1}^{t} \int_K (\bu-k)_{-}^p \abs*{D\xi}^p \, dx d\tau \, .
    \end{equation*}
   Collecting all theses estimates for $P$, $E$ and $R$, we arrive at 
    \begin{equation}
        \label{eq7}
        \begin{aligned}
            & \int_{K \times \{ t\}} \tilde{g}_{-}(\bu,k) \xi^p (x,\tau) \, dx + \frac{1}{2} \int_{t_1}^{t} \int_K \abs*{D(\bu-k)_{-}}^{p} \xi^p \, dx d\tau \\
            \leq & \int_{K \times \{ t_1\}} \tilde{g}_{-}(\bu,k) \xi^p (x,\tau) \, dx + \int_{t_1}^{t_2} \int_K \tilde{g}(\bu,k) \partial_{\tau} \xi^p \, dx d\tau + 2^{p-1}p^p\int_{t_1}^{t_2} \int_K (\bu-k)_{-}^p \abs*{D\xi}^p \, dx d\tau \, .
        \end{aligned}
    \end{equation}
    and, due to the fact that this holds for all $t \in (t_1,t_2)$, just take the supremum on $t$ on the left hand side to obtain the desired estimate. 
\end{proof}

\noindent Next result is the corresponding one for $\bu$ to Lemma \ref{SHR1}: expanding the theoretical information \eqref{infobaru} in time.
    \begin{lemma}
        \label{SHR2}
        Let $\bu$ be a non negative, local weak solution to \eqref{EQ2}. Assume that, for some $(y,s) \in \Omega_T$ and some $R>0$,
        \begin{equation}
        \label{ip1}
            \abs*{K_R(y) \cap \{\bu(x,s) > M \}} \geq \alpha \abs*{K_R(y)}
        \end{equation}
        for some $\omega > M>0$ and some $0<\alpha<1$. Then, there exist $\epsilon$, $\delta \in (0,\frac{1}{2})$, depending only on $\alpha$ and on $N$, $p$ and $q$, such that
        \begin{equation*}
            \abs*{K_R(y) \cap \{\bu(x,t) > \epsilon M \}} \geq \frac{\alpha}{2} \abs*{K_R(y)}
        \end{equation*}
        for all $t \in (s,s+ \delta M^{2-p} \omega^{q-1} R^p] $.
    \end{lemma}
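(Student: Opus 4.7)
The plan is to apply the energy estimate from Lemma \ref{LemmaEE} at truncation level $k=M$ on a cylinder $K_{(1+\sigma)R}(y)\times(s,s+\theta R^p)$ with $\theta=\delta M^{2-p}\omega^{q-1}$, and then to compare $\tilde g_-(\bar u,M)$ at the initial time with its size on the bad set $\{\bar u<\epsilon M\}$ at later times. The free parameters $\sigma,\delta,\epsilon\in(0,1)$ will be chosen at the end only in terms of $\alpha,N,p,q$. I would take a time-independent cutoff $\xi(x)$ equal to $1$ on $K_R(y)$, supported in $K_{(1+\sigma)R}(y)$, with $|D\xi|\le C/(\sigma R)$. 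Since $\partial_\tau\xi^p\equiv 0$, discarding the gradient term on the left of \eqref{EE} and estimating $(\bar u-M)_-\le M$ in the $|D\xi|^p$ integral yields
\begin{equation*}
\sup_{\tau\in(s,s+\theta R^p]}\int_{K_R}\tilde g_-(\bar u(\cdot,\tau),M)\,dx\le\int_{K_{(1+\sigma)R}\times\{s\}}\tilde g_-(\bar u,M)\,dx+\frac{C\,\theta\,M^p}{\sigma^p}|K_R|.
\end{equation*}

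The core of the argument is a two-sided control of $\tilde g_-$. Since $0<q<1$ the weight $(\omega-s)^{q-1}$ is increasing in $s\in[0,\omega)$, so for $0\le\bar u\le M<\omega$ the map $\bar u\mapsto\tilde g_-(\bar u,M)$ is monotone decreasing and the trivial bound $(\omega-s)^{q-1}\ge\omega^{q-1}$ gives the lower bound $\tilde g_-(\bar u,M)\ge\tfrac{q}{2}\omega^{q-1}(M-\bar u)^2$. For the upper bound, after the substitution $r=M-s$ one obtains both $\tilde g_-(0,M)\le\tfrac{q}{q+1}M^{q+1}$ (from $(\omega-M+r)^{q-1}\le r^{q-1}$) and $\tilde g_-(0,M)\le\tfrac{q}{2}(\omega-M)^{q-1}M^2$ (from $(\omega-M+r)^{q-1}\le(\omega-M)^{q-1}$); a case split on whether $M\le\omega/2$ or $M\ge\omega/2$, using whichever of the two majorants is better in each case, produces the uniform bound
\begin{equation*}
\tilde g_-(\bar u,M)\le\tilde g_-(0,M)\le C(q)\,\omega^{q-1}M^2\qquad\text{for all }M\in(0,\omega).
\end{equation*}
By the hypothesis \eqref{ip1}, $\tilde g_-(\bar u(\cdot,s),M)=0$ on a set of measure at least $\alpha|K_R|$, so the initial integral is bounded by $C\omega^{q-1}M^2(1-\alpha+C\sigma)|K_R|$, while on $K_R\cap\{\bar u(\cdot,t)<\epsilon M\}$ the lower bound forces $\tilde g_-\ge\tfrac{q}{2}\omega^{q-1}(1-\epsilon)^2M^2$.

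With the choice $\theta=\delta M^{2-p}\omega^{q-1}$, the $|D\xi|^p$ contribution equals $C\delta\omega^{q-1}M^2|K_R|/\sigma^p$. Collecting everything and dividing through by $\tfrac{q}{2}\omega^{q-1}M^2(1-\epsilon)^2$, the $\omega^{q-1}M^2$ scaling cancels exactly and I obtain
\begin{equation*}
|K_R(y)\cap\{\bar u(\cdot,t)<\epsilon M\}|\le\frac{C(1-\alpha+\sigma)+C\delta/\sigma^p}{(1-\epsilon)^2}|K_R|\qquad\forall\,t\in(s,s+\theta R^p].
\end{equation*}
I would then fix $\sigma=\sigma(\alpha)$ so that $C(1-\alpha+\sigma)\le 1-\tfrac34\alpha$, then $\delta=\delta(\alpha,\sigma)$ so that $C\delta/\sigma^p\le\alpha/8$, and finally $\epsilon=\epsilon(\alpha)$ small enough to absorb the $(1-\epsilon)^{-2}$ factor into $1-\alpha/2$; the right-hand side then drops below $1-\alpha/2$, which is the complement of the claim.

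The main obstacle is the uniform upper bound on $\tilde g_-(0,M)$: the weight $(\omega-s)^{q-1}$ is singular as $s\to\omega^-$ and neither elementary majorant alone produces the $\omega^{q-1}M^2$ scaling throughout $M\in(0,\omega)$; only after combining the two in a case split does the scaling come out cleanly. This scaling matches the lower bound on the bad set and is what forces the intrinsic time scale $M^{2-p}\omega^{q-1}R^p$: the natural $p$-Laplacian scale $M^{2-p}R^p$ dressed with the extra factor $\omega^{q-1}$ inherited from the weight $(\omega-\bar u)^{q-1}\asymp\omega^{q-1}$ in \eqref{EQ2}.
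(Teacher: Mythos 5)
Your overall strategy (energy estimate \eqref{EE} with a time-independent cutoff, then comparison of $\tilde g_-(\bar u,M)$ at the initial time with its size on the set $\{\bar u<\epsilon M\}$ at later times) is the same as the paper's, but the constant bookkeeping at the final step does not close, and this is a genuine gap rather than a cosmetic one. You bound the initial integrand by $\tilde g_-(\bar u(\cdot,s),M)\le \tilde g_-(0,M)\le C(q)\,\omega^{q-1}M^2$ and the bad-set integrand from below by $\tfrac q2\,\omega^{q-1}(1-\epsilon)^2M^2$; these two bounds differ by a fixed multiplicative factor that does \emph{not} tend to $1$ as $\epsilon\to 0$. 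This is not an artifact of lax constants: for $0<q<1$ and $M$ close to $\omega$ one has $\tilde g_-(0,M)\to \tfrac{q}{q+1}\,\omega^{q+1}$ while $\tfrac q2\,\omega^{q-1}M^2\to\tfrac q2\,\omega^{q+1}$, so the intrinsic ratio is $\tfrac{2}{q+1}$, which approaches $2$ as $q\to 0$. Consequently, after dividing, the main term in your final display is $\tfrac{2C(q)}{q(1-\epsilon)^2}\,(1-\alpha+C\sigma)$ with $\tfrac{2C(q)}{q}>1$, and no choice of $\sigma,\delta,\epsilon$ brings it below $1-\tfrac\alpha2$ once $\alpha$ is small (for $q=1/2$ the requirement $\tfrac43(1-\alpha)\le 1-\tfrac\alpha2$ already forces $\alpha$ bounded away from $0$, and for $q$ near $0$ even $\alpha=\tfrac12$, the value used later in the paper, fails). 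The paper sidesteps exactly this: it splits $\tilde g_-(0,M)=q\int_0^{\epsilon M}(\omega-s)^{q-1}(M-s)\,ds+q\int_{\epsilon M}^{M}(\omega-s)^{q-1}(M-s)\,ds$ and divides by precisely $q\int_{\epsilon M}^{M}(\omega-s)^{q-1}(M-s)\,ds$, which is also the lower bound for $\tilde g_-$ on $\{\bar u<\epsilon M\}$; the dominant piece then cancels \emph{exactly}, the main term is $\bigl(1+\tfrac{\int_0^{\epsilon M}}{\int_{\epsilon M}^{M}}\bigr)(1-\alpha)$ with a correction of size $O(\epsilon)$, and only that correction needs the comparability estimates for the weight. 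To repair your argument you need this exact cancellation (or an upper bound for $\tilde g_-(0,M)$ that is $(1+o_\epsilon(1))$ times your bad-set lower bound), not two one-sided bounds with different constants.

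A second gap: you assume $0<q<1$ throughout (``the weight $(\omega-s)^{q-1}$ is increasing''), but the lemma must cover the full range $0<q<p-1$ with $p>2$, so $q\ge 1$ occurs; there $(\omega-s)^{q-1}\le\omega^{q-1}$ and your lower bound $\tilde g_-(\bar u,M)\ge\tfrac q2\,\omega^{q-1}(M-\bar u)^2$ is false. The paper treats $q>1$ separately, bounding the weight below by $(\omega-M)^{q-1}$, which is why it writes $M=a\omega$ and its choices of $\delta,\epsilon$ carry factors $(1-a)^{q-1}$ (harmless in the application, where $M\le\tfrac34\omega$). Two minor points: your outward cutoff supported in $K_{(1+\sigma)R}(y)$ is only admissible if the enlarged cube still sits inside $\Omega$ (the paper shrinks to $K_{(1-\sigma)R}$ to avoid needing extra room; in the application there is room, so this is cosmetic), and, as in the paper, invoking \eqref{ip1} at the single time $s$ as the initial term of \eqref{EE} should be understood at an admissible Lebesgue time level.
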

    \begin{proof}
        Consider, without loss of generality $(y,s)=(0,0)$, and take  $M=a \omega$ for some $a \in (0,1)$. Writting the energy estimates \eqref{EE} over the cylinder $Q= K_R \times (0 , \delta M^{2-p} \omega^{q-1} R^p]$ with $k = M$ and $\xi \in C_0^1(K_R)$, $0 \leq \xi(x) \leq 1$,  $\xi \equiv 1$ in $K_{(1-\sigma)R}$, $\abs*{D\xi} \leq \frac{1}{\sigma R}$ (where $\sigma$, $\delta \in (0,1)$ are to be chosen), we obtain
\begin{equation}
    \label{eq8}
        \begin{aligned}
            & \sup_{(0, \delta M^{2-p} \omega^{q-1} R^p]} \int_{K_R} \tilde{g}_{-}(\bu,M) \xi^p (x,\tau) \, dx + \frac{1}{2} \iint_Q \abs*{D(\bu-M)_{-}}^{p} \xi^p \, dx d\tau \\
            \leq & \int_{K_R \times \{ 0\}} \tilde{g}_{-}(\bu,M) \xi^p (x,\tau) \, dx +  C \iint_Q (\bu-M)_{-}^p \abs*{D\xi}^p \, dx d\tau \\
            \leq & \int_{K_R \times \{ 0\}} \tilde{g}_{-}(\bu,M) \xi^p (x,\tau) \, dx + C \frac{M^2 \delta \omega^{q-1}}{\sigma^p} \abs*{K_R} \, .
        \end{aligned}
\end{equation}
Observe that, for every $t \in (0, \delta M^{2-p} \omega^{q-1} R^p]$,
\begin{equation*}
    \begin{aligned}
        & \sup_{(0, \delta M^{2-p} \omega^{q-1} R^p]} \int_{K_R} \tilde{g}_{-}(\bu,M) \xi^p (x,\tau) \, dx + \frac{1}{2} \iint_Q \abs*{D(\bu-M)_{-}}^{p} \xi^p \, dx d\tau  \\
        \geq & \int_{K_{(1-\sigma)R} \times \{ t\}} \tilde{g}_{-}(\bu,M) \, dx = \int_{K_{(1-\sigma)R} \times \{ t\}} \biggl( q \int_{\bu}^M (\omega-s)^{q-1}(M-s) \, ds  \biggl) \chi_{\{ \bu < M\}}\, dx \\
        \geq & \int_{K_{(1-\sigma)R} \times \{ t\}} \biggl( q \int_{\bu}^M (\omega-s)^{q-1}(M-s) \, ds  \biggl) \chi_{\{ \bu < \epsilon M \}}\, dx \\
        \geq & \int_{K_{(1-\sigma)R} \times \{ t\}} \biggl( q \int_{\epsilon M}^M (\omega-s)^{q-1}(M-s) \, ds  \biggl) \chi_{\{ \bu < \epsilon M \}}\, dx \\
        = &  \abs*{K_{(1-\sigma)R} \cap \{u(x,t) < \epsilon M \}} \times \biggl( q \int_{\epsilon M}^M (\omega-s)^{q-1}(M-s) \, ds  \biggl) \, 
    \end{aligned}
\end{equation*}
then, for all $t \in (0, \delta M^{2-p} \omega^{q-1} R^p]$
    \begin{equation}
        \label{eq9}
        \begin{aligned}
             \abs*{K_{(1-\sigma)R} \cap \{u(x,t) < \epsilon M \}}  \leq & \frac{\int_{K_R \times \{ 0\}} \tilde{g}_{-}(\bu,M) \xi^p (x,\tau) \, dx}{q \int_{\epsilon M}^M (\omega-s)^{q-1}(M-s) \, ds} \\
             +& C \frac{M^2 \delta_2 \omega^{q-1}}{\sigma^p} \frac{1}{q \int_{\epsilon M}^M (\omega-s)^{q-1}(M-s) \, ds}  \abs*{K_R} \\
             \leq &\biggl( 1 + \frac{\int_{0}^{\epsilon M} (\omega-s)^{q-1}(M-s) \, ds}{ \int_{\epsilon M}^M (\omega-s)^{q-1}(M-s) \, ds} \biggl) (1-\alpha)\abs*{K_R} \\
             + &C \frac{M^2 \delta \omega^{q-1}}{\sigma^p} \frac{1}{q \int_{\epsilon M}^M (\omega-s)^{q-1}(M-s) \, ds}  \abs*{K_R} \, ,
        \end{aligned}
    \end{equation}
in the last inequality we have used \eqref{ip1} and the fact that 
\begin{equation*}
    \abs*{K_R \cap \{u(x,0) < M\}} = \abs*{K_R} - \abs*{K_R \cap \{ u(x,0) \geq M \}} \leq (1-\alpha)\abs*{K_R} \, .
\end{equation*}

\noindent In order to bound from above the right hand side of \eqref{eq9}, we bound from below the denominator
    \begin{equation*}
        \begin{aligned}
            q \int_{\epsilon M}^M (\omega-s)^{q-1} (M-s) \, ds \geq &
            \begin{cases}
                q(\omega -M)^{q-1} \frac{(M-\epsilon M)^2}{2} & \quad, q>1 \\
                q(\omega -\epsilon M)^{q-1} \frac{(M-\epsilon M)^2}{2} & \quad, 0<q<1
            \end{cases}\\
            \geq & 
             \begin{cases}
                q \, \omega^{q-1} \, (1-a)^{q-1} \frac{M^2}{2^3} & \quad, q>1 \\
               q (\omega -\epsilon M)^{q-1} \frac{M^2}{2^3} & \quad, 0<q<1 \, ,
            \end{cases}\\ 
        \end{aligned}
    \end{equation*}
    and from above the numerator
    \begin{equation*}
        \begin{aligned}
            q \int_{0}^{\epsilon M} (\omega-s)^{q-1} (M-s) \, ds \leq &
            \begin{cases}
                q\, \omega^{q-1} \frac{M^2 - (M-\epsilon M)^2}{2} & \quad, q>1 \\
                q(\omega -\epsilon M)^{q-1} \frac{M^2 -(M-\epsilon M)^2}{2} & \quad, 0<q<1
            \end{cases}\\
            \leq & 
             \begin{cases}
                q\,\omega^{q-1} \epsilon M^2 & \quad, q>1 \\
               q (\omega -\epsilon M)^{q-1} \epsilon M^2 & \quad, 0<q<1 \, ,
            \end{cases}\\
        \end{aligned}
    \end{equation*}
   to arrive at
   \begin{enumerate}
       \item for $q>1$, 
       \begin{equation}
    \label{eq10}
    \begin{aligned}
        \abs*{K_R \cap \{u(x,t) < \epsilon M \}} \leq & \biggl( \biggl( 1 +\frac{\epsilon 2^3  }{(1-a)^{q-1}} \biggl)(1-\alpha) + C\frac{  \delta 2^3}{q \sigma^p (1-a)^{q-1}} + \sigma^N \biggl)\abs*{K_R} \\
        \leq &\biggl(1- \frac{\alpha}{2} \biggl) \abs*{K_R} \, ,
    \end{aligned}   
    \end{equation}
    if we choose $\sigma^N \leq \frac{\alpha}{8}$, $\delta \leq \frac{\alpha}{8}(\frac{(1-a)^{q-1}\sigma^p}{2^3C})$ and $\epsilon \leq \frac{\alpha (1-a)^{q-1}}{2^5 (1-\alpha)}$.

    \item for $0<q<1$ 
    \begin{equation}
        \label{eq11}
        \begin{aligned}
              \abs*{K_R \cap \{u(x,t) < \epsilon M \}} \leq & \biggl( \bigl( 1 +\epsilon 2^3   \bigl)(1-\alpha) + C\frac{  \delta 2^3}{q \sigma^p (1-\epsilon a)^{q-1}} + \sigma^N \biggl)\abs*{K_R} \\
        \leq &\biggl(1- \frac{\alpha}{2} \biggl) \abs*{K_R}  \, ,
        \end{aligned}
    \end{equation}
    if we choose $\sigma^N \leq \frac{\alpha}{8}$, $\epsilon \leq \frac{\alpha}{2^5 (1-\alpha)}$ and $\delta \leq \frac{\alpha}{8}(\frac{(1-\epsilon a)^{q-1}\sigma^p}{2^3C})$.
       
   \end{enumerate}
\noindent Therefore, choosing the smallest of the two possibilities gives us the proof.
    \end{proof}

\vspace{.2cm}

\noindent Lemma \ref{SHR2} provides us with the first step towards the regularity result: for $(y,s)=(x_0,s)$, $M = \sigma_{0}\frac{3}{4}\omega$, $\alpha= \frac{1}{2}$, we can assure the existence of small positive numbers $\epsilon_2$, $\delta_2 \in (0,\frac{1}{2})$, depending only on $p$, $q$, $N$, such that 
    \begin{equation}
        \label{eq12}
        \abs*{K_R (x_0) \cap \{\bu(x,t) > \epsilon_2 \sigma_0 \frac{3}{4}\omega\}} \geq \frac{1}{4} \abs*{K_R(x_0)}
    \end{equation}
    for all $t \in (s,s+\delta_2 (\sigma_0 \frac{3}{4})^{2-p} \omega^{q+1-p} R^p]$. From here we can go to step 2: use the  measure theoretical information \eqref{eq12} to get a pointwise information, an analog to what we have achieved in Proposition \ref{CLU1}.

    \begin{proposition}
        \label{CLU2}
        Let $\bu$ be a nonnegative, local weak solution to \eqref{EQ2} in $\Omega_T$. Assume that it holds
        \begin{equation*}
            \abs*{K_R (x_0) \cap \{\bu(x,t) > \epsilon_2 \sigma_0 \frac{3}{4}\omega\}} \geq \frac{1}{4} \abs*{K_R(x_0)}
        \end{equation*}
        for some $\epsilon_2$, $\delta_2 \in (0,\frac{1}{2})$, $\sigma_0 \in (0,1)$ and for all $t \in (s,s+\delta_2 (\sigma_0 \frac{3}{4})^{2-p} \omega^{q+1-p} R^p]$. Then, there exist $\eta_2 \in (0,1)$, $s_2 \in \N$, depending on $N$, $p$, $q$, such that
        \begin{equation*}
            u(x, t) \geq \eta_2 \frac{3}{4}\omega \quad \text{a.e.} \ x \in  K_{2R} (x_0)
        \end{equation*}
        for all $t \in \biggl(s + \delta_2\bigl(\frac{3}{4} \bigl)^{2-p} \omega^{1+q-p}  e^{ (b_2)^{2-p} \frac{6^p}{\delta_2}}R^p , s + \delta_2 \bigl(\frac{3}{4} \bigl)^{2-p} \omega^{1+q-p}  e^{ (b_2 )^{2-p}\frac{8^p}{\delta_2}}R^p\biggl) $, being $b_2= \epsilon_2 2^{-s_2}$.
    \end{proposition}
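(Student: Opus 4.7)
My plan is to mirror the proof of Proposition \ref{CLU1} from \cite{Henriques 2022}, with the role of $u$ and \eqref{EQ} replaced by that of $\bu$ and \eqref{EQ2}, and with the energy estimate Lemma \ref{LemmaEE} replacing the $u$-energy estimate. The measure hypothesis holds uniformly on the whole time slab of length $\delta_2(\sigma_0\tfrac34)^{2-p}\omega^{q+1-p}R^p$, which is the exact analogue of the hypothesis of Proposition \ref{CLU1}; the only structural novelty is that the time-derivative of $(\omega-\bu)^q$ produces, after truncation at a level $\kappa<\omega$, a weight of order $(\omega-\kappa)^{q-1}\simeq\omega^{q-1}$, provided the truncation stays uniformly away from $\omega$. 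Under the intrinsic geometry at hand this is indeed the case, and along the iteration \eqref{EQ2} behaves like a $p$-Laplace equation with constant weight $\omega^{q-1}$ -- which is why the exponent $2-p$, and not $p-q-1$, shows up in the exponential factors $b_2^{2-p}$.

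The first step is the exponential shift of \cite{Henriques 2022}: introduce a sequence of shrinking cubes $K_{R_n}(x_0)$ with $R_n$ interpolating between $R$ and $2R$, a sequence of shifted truncation levels $\kappa_n=(\epsilon_2 2^{-n})\sigma_0\tfrac34\omega$, and on each associated cylinder use a logarithmic-type test function of the form $\Psi\bigl(\ln(\kappa_n/\bu)\bigr)_{+}\xi^p$ adapted to \eqref{EQ2}. This converts the uniform-in-time measure smallness of $\{\bu<\kappa_n\}$ into a pointwise lower bound by consuming the intrinsic time slab; the two exponential factors $e^{b_2^{2-p}6^p/\delta_2}$ and $e^{b_2^{2-p}8^p/\delta_2}$ appearing in the conclusion are produced precisely by the $s_2$ iterations needed to complete the shift, with $b_2=\epsilon_2 2^{-s_2}$ tracking the contraction of the levels.

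The second step is a De Giorgi iteration in the variables $(K_{R_n},\kappa_n)$ using Lemma \ref{LemmaEE}: on the set $\{\bu<\kappa_n\}$ the integrand $\tilde g_{-}(\bu,\kappa_n)$ is comparable to $\omega^{q-1}(\kappa_n-\bu)^2$, and combining the energy inequality with Lemma \ref{sobemb} yields a recursive inequality of the type
\begin{equation*}
Y_{n+1}\le C\,b^{n}\,Y_n^{1+\alpha},\qquad Y_n=\frac{|\{\bu<\kappa_n\}\cap Q_n|}{|Q_n|},
\end{equation*}
with $\alpha=p/N$. The smallness of $Y_0$ required by Lemma \ref{FCL} is supplied by Lemma \ref{SHR2} iterated $s_2$ times through the shift, and then $Y_n\to 0$ produces the desired pointwise lower bound on $\bu$ (equivalently the upper bound on $u$) throughout $K_{2R}(x_0)$ in the stated time window.

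The main obstacle, as I see it, is the careful bookkeeping of the weight $(\omega-\bu)^{q-1}$ across the iteration: unlike in the $u$-version of the expansion of positivity, where the weight $\bu^{q-1}$ vanishes on low-level sets, here it degenerates as $\bu\uparrow\omega$, which is the \emph{high} end of the truncation. The intrinsic factor $(\sigma_0\tfrac34)^{2-p}$ in the slab length is exactly what forces the sequence of levels $\kappa_n$ to stay uniformly below $\omega$, so that the weight remains bounded below by a multiple of $\omega^{q-1}$; propagating this control through the $s_2$-fold iteration without losing the geometric decay is the most delicate point, and is precisely what the exponential shift of \cite{Henriques 2022} is designed to manage -- consistently with the remark in the introduction that at this stage the arguments fall into the $p$-Laplace mode.
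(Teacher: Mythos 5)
Your global architecture (exploit that the hypothesis holds for every $\sigma_0$, energy estimates from Lemma \ref{LemmaEE} with the weight kept comparable to $\omega^{q-1}$, then a De Giorgi iteration via Lemma \ref{sobemb} and Lemma \ref{FCL}) is broadly the one the paper follows, and your observation that the argument falls into $p$-Laplace mode, so that $b_2^{2-p}$ rather than $b_2^{\,q+1-p}$ appears, is correct. But there is a genuine gap at the pivotal middle step: where the initial smallness $Y_0$ required by Lemma \ref{FCL} comes from. You claim it is ``supplied by Lemma \ref{SHR2} iterated $s_2$ times through the shift''. Lemma \ref{SHR2} only propagates a fixed fraction ($\alpha/2=1/4$ here) forward in time; iterating it never improves the complementary bound $|K_R\cap\{\bu<\kappa\}|\le \frac34|K_R|$ into an arbitrarily small one, so it cannot deliver the threshold $C^{-1/\alpha}b^{-1/\alpha^2}$ needed to start the fast geometric convergence. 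For the same reason your first step is mis-stated: the hypothesis does \emph{not} give ``uniform-in-time measure smallness of $\{\bu<\kappa_n\}$'' (that set may occupy three quarters of the cube), so a logarithmic test function $\Psi(\ln(\kappa_n/\bu))_+\xi^p$ over shrinking cubes cannot, as described, convert it into a pointwise bound — the smallness is exactly what still has to be produced.

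What the paper does instead, and what your sketch is missing, is: (i) the exponential shift is implemented not through logarithmic estimates but through the choice $\sigma_0=e^{-r/(p-2)}$ together with the change of variables $e^{r}=\frac{t-s}{\delta_2R^p}\,\omega^{1-q}\bigl(\frac34\omega\bigr)^{p-2}$ and $v(x,r)=\frac{4\bu(x,t)}{3\omega}\,e^{r/(p-2)}(\delta_2R^p)^{1/(p-2)}$, which makes the sliced information $|K_R\cap\{v(\cdot,r)\ge k_0\}|\ge\frac14|K_R|$ hold at \emph{every} transformed time and turns \eqref{EQ2} into an inequality of $p$-Laplace type with a bounded weight; (ii) the smallness of the sub-level set is then obtained from the isoperimetric Lemma \ref{isoperi}, applied on each time slice at the levels $k_j=k_0 2^{-j}$, combined with the energy estimate and summed over $j=0,\dots,s_2-1$, yielding $|\tilde Q\cap\{v<k_{s_2}\}|\le \bar\gamma\, s_2^{-(p-1)/p}|\tilde Q|$; only after choosing $s_2$ large enough to push $Y_0$ below the threshold does the De Giorgi iteration (whose exponent is $1+\frac{1}{N+2}$, not $p/N$, because the time part of the energy controls $(v-k)_-^2$) give $v\ge k_0 2^{-(s_2+1)}$, and undoing the change of variables at the times $\theta(6R)^p$ and $\theta(8R)^p$, with $\theta=(k_0 2^{-(s_2-1)})^{2-p}$, is what produces the factors $e^{b_2^{2-p}6^p/\delta_2}$ and $e^{b_2^{2-p}8^p/\delta_2}$. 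Without a step of type (ii) — or some substitute that actually shrinks the measure of the sub-level set — your argument cannot close.
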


    \begin{proof}
    \noindent To simplify notation and via a translation argument, take $x_0=0$. Consider $\sigma_r = e^{-\frac{r}{p-2}}\leq 1$, for all $r \geq 0$. From the hypothesis  we then get
    \begin{equation*}
        \abs*{K_R \cap \{\bu (x, s + \delta_2 e^r \biggl( \frac{3}{4} \omega \biggl)^{2-p} \omega^{q-1} R^p ) \geq \epsilon_2 \frac{3}{4}\omega e^{-\frac{r}{p-2}}\}} \geq \frac{1}{4} \abs*{K_R} \, 
    \end{equation*}
and then
    \begin{equation}
        \label{eq13a}
        \abs*{K_R\cap \{v(x,r) \geq k_0 \}} \geq \frac{1}{4} \abs*{K_R}
    \end{equation}
for the level $k_0 = \epsilon_2 (\delta_2 R^p)^{\frac{1}{p-2}}$ and the new time variable and function 
    \begin{equation*}
        e^r = \frac{t-s}{\delta_2 R^p} \omega^{1-q} \biggl( \frac{3}{4}\omega \bigg)^{p-2} \ , \quad  v(x,r) = \frac{4 \bu (x,t)}{3 \omega} e^{\frac{r}{p-2}} (\delta_2 R^p)^{\frac{1}{p-2}} \, ,
    \end{equation*}
 where $v$ satisfies 
    \begin{equation} \label{eq13}
        -q \partial_r v \biggl(1- \frac{3}{4} e^{-\frac{r}{p-2}} \bigl(\delta_2 R^p \bigl)^{- \frac{1}{p-2}}v \biggl)^{q-1} + \ \text{div}(\abs*{Dv}^{p-2}Dv ) \leqslant 0 \, .
    \end{equation}
 In what follows we consider the pair of cylinders
    \[\tilde{Q}= K_{4R}  \times (\theta(4R)^p, \theta(8R)^p) \subset Q= K_{8R} \times (\theta R^p, \theta (8R)^p)\]
    the sequence of levels 
    \[k_j = \frac{k_0}{2^j} \qquad j=0,\dots,s_2-1 \quad \text{for some $s_2 \in \N$} \]
    and the cutoff function $\xi \in C_0^1 (Q)$ satisfying
\[0 \leq \xi \leq 1, \quad \xi\equiv 1 \quad \text{in $\tilde{Q}$}, \quad \xi= 0 \quad\text{in $\R^{N+1} \backslash Q$} \ , \quad  \abs*{D\xi} \leq \frac{1}{4R} \quad  \text{and} \quad \abs*{\xi_r}\leq \frac{1}{\theta (3R)^p} \] where $\theta \in \R$ is to be chosen.

\noindent We now test \eqref{eq13} with the $\phi = (v-k_j)_{-} \xi^p$ and proceed formally (an accurate way of proceeding must pass through a mollified version of the equation as we did in recovering the energies estimates): 
    \begin{equation}
    \label{eq14}
    \begin{aligned}
        0 \geq-&q \int_{\theta R^p}^{\theta (8R)^p} \int_{K_{8R}} \biggl(1- \frac{3}{4}e^{\frac{-r}{p-2}} (\delta_2R^p)^{-\frac{1}{p-2}}v \biggl)^{q-1} \partial_r v (v-k_j)_{-} \xi^p \, dx dr \\
        - &\int_{\theta R^p}^{\theta (8R)^p} \int_{K_{8R}} \abs*{Dv}^{p-2} Dv \cdot D((v-k_j)_{-})\xi^p) \, dx dr 
        \end{aligned}
    \end{equation}
  To estimate the integral evolving the time derivative, we define 
    \begin{equation*}
        \mathcal{I}(v,k_j)= q \int_{v}^{k_j} \biggl(1 - \frac{3}{4} e^{-\frac{r}{p-2}}(\delta_2 R^p)^{-\frac{1}{p-2}}s \biggl)^{q-1} (k_j - s) \, ds
    \end{equation*}
 and observe that 
    \begin{equation*}
        \begin{aligned}
       & \partial_r \mathcal{I}(v,k_j) = -q \biggl( 1-\frac{3}{4} e^{-\frac{-r}{p-2}} (\delta_2 R^p)^{-\frac{1}{p-2}}v \biggl)^{q-1}(v-k_j)_{-}\partial_r v \\
        + &q\int_v^{k_j} (q-1)\biggl(1-\frac{3}{4} e^{-\frac{-r}{p-2}} (\delta_2 R^p)^{-\frac{1}{p-2}}s  \biggl)^{q-2}(k_j-s) \biggl[ \frac{3}{4} \biggl(\frac{1}{p-2} \biggl)e^{-\frac{r}{p-2}} (\delta_2 R^p)s \biggl] \, ds \ .
        \end{aligned}
    \end{equation*}
   Plugging this into \eqref{eq14} and computing the gradient we arrive at
    \begin{equation}
        \label{eq15}
        \begin{aligned}
            0 \geq &  \int_{\theta R^p}^{\theta (8R)^p} \int_{K_{8R}} \partial_r \mathcal{I}(v,k_j) \xi^p \, dx dr \\
            -& \frac{3}{4}(\delta_2 R^p) \biggl( \frac{q-1}{p-2} \biggl) q \int_{\theta R^p}^{\theta (8R)^p} \int_{K_{8R}} e^{-\frac{r}{p-2}} \int_v^{k_j} \biggl( 1 - \frac{3}{4} e^{-\frac{r}{p-2}} (\delta_2 R^p)^{-\frac{1}{p-2}} s \biggl)^{q-2}s (k_j -s) \, ds \, dx dr \\
        + &\int_{\theta R^p}^{\theta (8R)^p} \int_{K_{8R}} \abs*{D(v-k_j)_{-}}^{p}  \xi^p \, dx dr + p\int_{\theta R^p}^{\theta (8R)^p} \int_{K_{8R}} (D(v-k_j)_{-})^{p-1} D\xi \, \xi^{p-1} (v-k_j)_{-} \, dx dr \\
        =:& I_1 -I_2 +I_3 + I_4 \, .
        \end{aligned}
    \end{equation}
    Using Young's inequality on $I_4$ 
    \begin{equation*}
        \begin{aligned}
           I_4 =  & \ p\int_{\theta R^p}^{\theta (8R)^p} \int_{K_{8R}} (D(v-k_j)_{-})^{p-1} D\xi \, \xi^{p-1} (v-k_j)_{-} \, dx dr \\
            \geq & - p\int_{\theta R^p}^{\theta (8R)^p} \int_{K_{8R}} \abs*{D(v-k_j)_{-}}^{p-1} \abs*{D\xi} \, \xi^{p-1} (v-k_j)_{-} \, dx dr \\
            \geq & - \frac{1}{2}\int_{\theta R^p}^{\theta (8R)^p} \int_{K_{8R}} \abs*{D(v-k_j)_{-}}^{p}\, \xi^{p}  \, dx dr - p^p 2^{p-1} \int_{\theta R^p}^{\theta (8R)^p} \int_{K_{8R}} \abs*{D \xi}^p (v-k_j)_{-}^p \, dx dr \, ,
        \end{aligned}
    \end{equation*}
    hence 
    \begin{equation*}
        0 \geq I_1 - I_2 +  \frac{1}{2}I_3 - p^p 2^{p-1} \int_{\theta R^p}^{\theta (8R)^p} \int_{K_{8R}} (v-k_j)_{-}^p \, D\xi^p \, dx dr \, ,
    \end{equation*}
    so that 
    \begin{equation}
        \label{eq16}
        \begin{aligned}
            \frac{1}{2} \int_{\theta R^p}^{\theta (8R)^p} \int_{K_{8R}} \abs*{D(v-k_j)_{-}}^{p}  \xi^p \, dx dr \leq I_2 - \int_{K_{8R}} \mathcal{I}(v,k_j) \xi^p \, dx \biggl|_{\theta R^p}^{\theta (8R)^p} \\
            + \int_{\theta R^p}^{\theta (8R)^p} \int_{K_{8R}}p^p 2^{p-1} (v-k_j)_{-}^p \, \abs*{D\xi}^p + \mathcal{I}(v,k_j) \partial_r (\xi^p) \, dx dr \, .
        \end{aligned}
    \end{equation}

    \noindent We claim that, for $q >1$,
    \begin{equation}
        \label{eq17}
        \frac{q}{2} \biggl( \frac{1}{4} \biggl)^{q-1} (v-k_j)_{-}^2 \leq \mathcal{I}(v,k_j) \leq \frac{q}{2} (v-k_j)_{-}^2 \, .
    \end{equation}
    \noindent In fact, observe that on the set where $\{v<k_j \}$ we have $v < \frac{k_0}{2^j}= \frac{\epsilon_2 (\delta_2 R^p)^{\frac{1}{p-2}}}{2^j}$, so
    \begin{equation*}
        q \int_{v}^{k_j} \biggl(1 - \frac{3}{4} e^{-\frac{r}{p-2}}(\delta_2 R^p)^{-\frac{1}{p-2}}s \biggl)^{q-1} (k_j - s) \, ds \leq q \int_{v}^{k_j} (k_j -s) \, ds = \frac{q}{2} (v-k_j)_{-}^2
    \end{equation*}
    and
    \begin{equation*}
        \begin{aligned}
             & q \int_{v}^{k_j} \biggl(1 - \frac{3}{4} e^{-\frac{r}{p-2}}(\delta_2 R^p)^{-\frac{1}{p-2}}s \biggl)^{q-1} (k_j - s) \, ds \\
             \geq & q \int_{v}^{k_j} \biggl(1 - \frac{3}{4} e^{-\frac{r}{p-2}}(\delta_2 R^p)^{-\frac{1}{p-2}}k_j \biggl)^{q-1} (k_j - s) \, ds \\
             = & q \int_{v}^{k_j} \biggl(1 - \frac{3}{4} e^{-\frac{r}{p-2}}\frac{\epsilon_2}{2^j} \biggl)^{q-1} (k_j - s) \, ds \\
             \geq & q \int_v^{k_j}  \biggl( 1-\frac{3}{4} \biggl)^{q-1} (k_j -s) \, ds = \frac{q}{2} \biggl( \frac{1}{4} \biggl)^{q-1} (v-k_j)_{-}^2 \, .
        \end{aligned}
    \end{equation*}
    It is easily proved that, for $q \in (0,1)$,  the inequalities are the inverse ones 
    \begin{equation*}
        \frac{q}{2}  (v-k_j)_{-}^2 \leq \mathcal{I}(v,k_j) \leq \frac{q}{2}\biggl( \frac{1}{4} \biggl)^{q-1} (v-k_j)_{-}^2 \, .
    \end{equation*}

    \noindent Finally, we want to estimate $I_2$ from above. For that purpose, recall that $e^{-x} \leq  \frac{1}{x}$, for all $ x \geq 0$, therefore 
    \begin{equation*}
        e^{-\frac{r}{p-2}} \leq \frac{p-2}{r} \, .
    \end{equation*}
\begin{enumerate}
    \item For $q>2$, 
    \begin{equation*}
        \begin{aligned}
            I_2 \leq & \frac{3}{4} (\delta_2 R^p)^{-\frac{1}{2-p}} \biggl( \frac{q-1}{p-2} \biggl) q \int_{\theta R^p}^{\theta (8R)^p} \frac{p-2}{r} \int_{K_{8R}} \biggl( \int_v^{k_j} s (k_j-s) \, ds \biggl) \, dx dr \\
             \leq & \frac{3}{4} (\delta_2 R^p)^{-\frac{1}{2-p}} \frac{(q-1)}{\theta (8R)^p}q\int_{\theta R^p}^{\theta (8R)^p} \int_{K_{8R}} k_j \biggl( \int_v^{k_j}  (k_j-s) \, ds \biggl) \, dx dr \\
             \leq & \frac{3}{4} (\delta_2 R^p)^{-\frac{1}{2-p}} \frac{(q-1)}{\theta (8R)^p} q\int_{\theta R^p}^{\theta (8R)^p} \int_{K_{8R}} k_j \biggl( \int_v^{k_j}  (k_j-s) \, ds \biggl) \, dx dr  \\
             \leq & \frac{3}{4} \epsilon_2 \frac{(q-1)}{\theta (8R)^p}  q\int_{\theta R^p}^{\theta (8R)^p} \int_{K_{8R}}   (v-k_j)_{-}^2 \, dx dr \, ;
        \end{aligned}
    \end{equation*}

    \item for $1\leq q \leq 2$,  
    \begin{equation*}
        \begin{aligned}
            I_2 \leq & \frac{3}{4} (\delta_2 R^p)^{-\frac{1}{2-p}} \biggl( \frac{q-1}{p-2} \biggl) q \int_{\theta R^p}^{\theta (8R)^p} \frac{p-2}{r} \int_{K_{8R}} \biggl( k_j \int_v^{k_j} (1- \frac{3}{4}e^{-\frac{r}{p-2}}\frac{\epsilon_2}{2^j})^{q-2} (k_j-s) \, ds \biggl) \, dx dr \\
            \leq &  \frac{3}{4} \epsilon_2 \frac{(q-1)}{\theta (8R)^p} q\int_{\theta R^p}^{\theta (8R)^p} \int_{K_{8R}}  \biggl( \int_v^{k_j} \biggl(\frac{1}{4} \biggl)^{q-2} (k_j-s) \biggl) \, dx dr \\
            \leq & \frac{3}{4} \epsilon_2 \biggl( \frac{1}{4} \biggl)^{q-2} \frac{(q-1)}{\theta (8R)^p} q\int_{\theta R^p}^{\theta (8R)^p} \int_{K_{8R}}   (v-k_j)_{-}^2 \, dx dr \, .
        \end{aligned}
    \end{equation*}

    \item for $0 \leq q \leq 1$,  $I_2 \leq 0$.
    
\end{enumerate}

\noindent In all these cases, we have a bound with a different (but still universal) constant $\gamma \geq 0$ that depends only on the data. Using this together with \eqref{eq17} in \eqref{eq16} we arrive at
    \begin{eqnarray} \label{eq18}
         \int_{\theta R^p}^{\theta (8R)^p} \int_{K_{8R}} \abs*{D(v-k_j)_{-}}^{p}  \xi^p \, dx dr &  \leq & \frac{\gamma}{\theta R^p} \int_{\theta R^p}^{\theta (8R)^p} \int_{K_{8R}} (v-k_j)_{-}^2 \, dx dr  + \gamma \int_{K_{8R} \times\{ \theta R^p\}} (v-k_j)_{-}^2 \, dx  \nonumber  \\
           &  &+ 2\int_{\theta R^p}^{\theta (8R)^p} \int_{K_{8R}} \frac{p^p 2^{p-1}}{R^p} (v-k_j)_{-}^p \, + \frac{(v-k_j)_{-}^2}{\theta (3R)^p} \, dx dr \,  \nonumber \\
           &  =& \biggl[ \frac{\gamma}{R^p} k_j^p + \frac{\gamma}{\theta R^p} k_j^2 \biggl] \abs*{Q \cap \{v < k_j \}} + {k_j^2}\abs*{K_{8R} \cap \{v(\cdot, \theta R^p) \leq k_j \}} \nonumber \\
            & \leq &  \gamma(p,q) \frac{k_j^p}{R^p} \abs*{ Q \cap \{v < k_j \}  }     
    \end{eqnarray}
once we choose  $\theta = \bigl(\frac{k_0}{2^{s_2 -1}} \bigl)^{2-p} \geq k_j^{2-p}$ for all $j= 1, \dots, s_2-1$. We can now apply Lemma \ref{isoperi} with $l=k_j$ and $k=k_{j+1}$ on $K_{4R}$ to obtain
    \begin{equation*}
        {k_j} \abs*{K_{4R} \cap \{v(\cdot,\tau) < k_{j+1} \}} \leq \frac{\gamma R^{N+1}}{\abs*{K_{4R} \cap \{v(\cdot, \tau) > k_j \}}} \int_{K_{4R} \cap  \{k_{j+1}< v(\cdot,\tau) < k_j \}} \abs*{Dv} \, dx \ ,
    \end{equation*}
then we integrate in time over the interval $(\theta (4R)^p, \theta (8R)^p)$
and arrive at
\begin{equation*}
        {k_j} \abs*{\tilde{Q} \cap \{v(\cdot,\tau) < k_{j+1} \}} \leq \gamma \ R \int_{\theta (4R)^p}^{\theta (8R)^p}\int_{K_{4R} \cap  \{k_{j+1}< v(\cdot,\tau) < k_j \}} \abs*{D(v-k_j)_{-}} \, dx d\tau \, .
    \end{equation*}
since  $\abs*{K_{4R} \cap \{v(\cdot, \tau) > k_j \}} \geq \abs*{K_{4R} \cap \{v(\cdot, \tau) > k_0 \}} \geq \frac{1}{4} \abs*{K_{4R}}$. By using  H\"older's inequality together with inequality \eqref{eq18}, we get
    \begin{equation}
        \label{eq20}
        \begin{aligned}
            {k_j} \abs*{\tilde{Q} \cap \{v(\cdot,\tau) < k_{j+1} \}} & \leq \frac{\gamma}{R} \biggl( \int_{\theta (4R)^p}^{\theta (8R)^p}\int_{K_{4R}} \abs*{D(v-k_j)_{-}}^p \, dx d\tau \biggl)^{\frac{1}{p}} \abs*{\tilde{Q} \cap \{k_{j+1} < v < k_j \}}^{\frac{p-1}{p}} \\
            & \leq \frac{\gamma}{R} \biggl( \int_{\theta R^p}^{\theta (8R)^p}\int_{K_{8R}} \abs*{D(v-k_j)_{-}}^p \xi^p \, dx d\tau \biggl)^{\frac{1}{p}} \abs*{\tilde{Q} \cap \{k_{j+1} < v < k_j \}}^{\frac{p-1}{p}} \\
            & \leq \gamma(p,q)\  k_j \ \abs*{\tilde{Q}}^{\frac{1}{p}}\abs*{\tilde{Q} \cap \{k_{j+1} < v < k_j \}}^{\frac{p-1}{p}} \, .
        \end{aligned}
    \end{equation}
 And now, by summing up for $j=0, \cdots, s_2-1$,  
\[ s_2 \abs*{\tilde{Q} \cap [v < k_{s_2}]}^{\frac{p}{p-1}} \leq  \sum_{j=0}^{s_2 -1} \abs*{\tilde{Q} \cap [v<k_{j+1}]}  \leq \sum_{j=0}^{s_2-1} \gamma \abs*{\tilde{Q} \cap [k_{j+1}<v<k_j]} \abs*{\tilde{Q}}^{\frac{1}{p-1}} \leq  \gamma \abs*{\tilde{Q}}^{\frac{p}{p-1}} \, \]
that is, 
       \begin{equation}
        \label{eq22}
         \abs*{\tilde{Q} \cap [v < k_{s_2}]} \leq \frac{\bar{\gamma}}{s_2^{\frac{p-1}{p}}} \abs*{\tilde{Q}} \, .
    \end{equation}
Next step is to pass from this measure type information to a pointwise information via a DeGiorgi type argument, fixing thereby $s_2$. So we 
\begin{enumerate}
    \item construct a sequence of nested and shrinking cylinders \[Q_n^{-} = K_{R_n} \times (\theta (8R)^p-\theta (R_n)^p, \theta (8R)^p) \subset \tilde{Q}\] where $2R < R_n = 2R\biggl( 1 + \frac{1}{2^n} \biggl) \leq 4R$,
    
    \item consider the family cut-off functions $\xi_n \in C^1_0 (Q_n^{-})$ such that 
    \[ 0<\xi_n<1, \quad \xi_n \equiv 1 \ \  \text{in} \  Q_{n+1}^{-}, \quad  \abs*{D\xi} \leq \frac{2^{n+1}}{R}, \quad  \abs*{\partial_r \xi} \leq \gamma \frac{2^{np}}{\theta R^p} \]

    \item proceed as before now with the test functions $(v-k_n)_-\xi_n^p$, where
    \[\frac{k_0}{2^{s_2+1} }< k_n = \frac{k_0}{2^{s_2 + 1}} \biggl( 1+ \frac{1}{2^n}\biggl) < \frac{k_0}{2^{s_2}} \]   
\end{enumerate}  
to  arrive at
    \begin{equation}
        \label{eq23}
        \int_{\theta (8R)^p -\theta R_n^p}^{\theta(8R)^p} \int_{K_{R_n}} \abs*{D(v-k_n)_{-}}^p \xi_n^p \, dx dr \leq \gamma \frac{k_n^p}{R_n^p} \abs*{Q_n^{-} \cap [v < k_n]} = \gamma \frac{k_n^p}{R_n^p} \abs*{A_n} \, .
    \end{equation}
 With the use of H\"older's inequality, Lemma \ref{sobemb} and inequality \eqref{eq23} we find
    \begin{equation*}
        \begin{aligned}
            (k_n - k_{n+1}) \abs*{A_{n+1}} & \leq \iint_{Q_{n+1}^{-}} (v-k_n)_{-} dx dr \leq \iint_{Q_n^{-}} (v-k_n)_{-} \xi_n^p \, dx dr \\
            & \leq \biggl[\iint_{Q_n^{-} } ((v-k_{n})_{-} \xi_n)^{p (\frac{N+2}{N})} \, dx dr \biggl]^{\frac{N}{p(N+2)}} \abs*{A_n}^{1- \frac{N}{p(N+2)}}\\
            & \leq \gamma \biggl[ \biggl( \iint_{Q_n^{-} } \abs*{D((v-k_{n})_{-} \xi_n)}^{p} \, dx dr \biggl) \biggl(\int_{B_{R_n}} [(v-k_n)_{-} \xi_n]^2 \, dx \biggl)^{\frac{p}{N}}\biggl]^{\frac{N}{p(N+2)}} \\
            & \quad \times \abs*{A_n}^{1-\frac{N}{p(N+2)}} \\
            & \leq \gamma \biggl[ \biggl( \iint_{Q_n^{-} } \abs*{D(v-k_{n})_{-}}^p \xi_n^{p} \, dx dr + \iint_{Q_n^{-}} (v-k_n)_{-}^p \abs*{D \xi_n}^p \biggl)\\
            & \quad \times \biggl(\int_{K_{R_n}} [(v-k_n)_{-} \xi_n]^2 \, dx \biggl)^{\frac{p}{N}}\biggl]^{\frac{N}{p(N+2)}} \abs*{A_n}^{1-\frac{N}{p(N+2)}}  \\
            & \leq \gamma \biggl[\biggl(\frac{k_n^p}{R_n^p} \biggl) \biggl( \frac{k_n^{2}}{\theta R_n^p} \biggl)^{\frac{p}{N}} \abs*{A_n}^{1+\frac{p}{N}}  \biggl]^{\frac{N}{p(N+2)}} \abs*{A_n}^{1-\frac{N}{p(N+2)}} \\
            & \leq \gamma \frac{k_n}{(\theta R_n^p R_n^N )^{\frac{1}{N+2}}} \abs*{A_n}^{1 + \frac{1}{N+2}} \, .
        \end{aligned}
    \end{equation*}
which leads to 
 \begin{equation}
        \label{eq25}
        Y_{n+1} \leq \gamma \ 2^n \ Y_n^{1+\frac{1}{N+2}} \ , \text{for} \ \ \qquad Y_n := \frac{\abs*{A_{n}}}{\abs*{Q_n^{-}}}
    \end{equation}
and therefore, by Lemma \ref{FCL}, 
 \begin{equation}
        \label{eq26}
        v(x,r) \geq \frac{k_0}{2^{s_2 +1}} \qquad \text{in $K_{2R} \times (\theta (6R)^p, \theta (8R)^p)$} \, .
    \end{equation}
once we choose $s_2$ big enough so that 
    \begin{equation*}
        s_2^{\frac{p-1}{p}} \geq \bar{\gamma} \gamma^{\frac{1}{\alpha}}b^{\frac{1}{\alpha^2}} \, .
    \end{equation*}
Inequality \eqref{eq26} transforms into 
    \begin{equation*}
        u(x,t) \geq \biggl( \frac{3 }{4} \omega \biggl) e^{- \bigl(\frac{\epsilon_2}{2^{s_2}} \bigl)^{2-p}\frac{8^p}{\delta_2(p-2)}} \frac{\epsilon_2}{2^{s_2 +1}}:= \eta_2 \frac{3}{4} \omega \qquad  \text{in $K_{2R}$} 
    \end{equation*}
    for almost all
    \begin{equation*}
        s + \biggl(\frac{3}{4} \biggl)^{2-p} \omega^{1+q-p} \delta_2 e^{ \bigl(\frac{\epsilon_2}{2^{s_2}} \bigl)^{2-p} \frac{6^p}{\delta_2}}R^p \leq t \leq s + \biggl(\frac{3}{4} \biggl)^{2-p} \omega^{1+q-p} \delta_2 e^{ \bigl(\frac{\epsilon_2}{2^{s_2}} \bigl)^{2-p}\frac{8^p}{\delta_2}}R^p \, ,
    \end{equation*}
completing the proof.
    \end{proof}

    \section{H\"older Continuity}

    \noindent In the previous section, we showed that for each one of the alternatives \eqref{alt1} and \eqref{alt2} we can derive a pointwise information of the solution $u$ in an {\it intrinsic} cylinder. We will use this information to prove the local H\"older continuity. 

    \vspace{.2cm}

    \noindent The first thing we want to show is that, by correctly choosing the constants $\epsilon_1$, $\delta_1$, $\epsilon_2$, $\delta_2 \in (0,1)$ and $s_1$, $s_2 \in \N$ coming from expansion of positivity results, we can construct a cylinder (more precisely a time interval) that combines the results coming from the study of the two alternatives within which 
    \begin{equation*}
        u \geq \min\left\{\frac{\eta_1}{4}, \frac{3 \, \eta_2}{4} \right\}  \omega =: \eta_{*}\omega \, .
    \end{equation*}
We denote
    \begin{equation*}
        \begin{aligned}
        &(s+a_1 \omega^{q+1-p} R^p,s+b_1\omega^{q+1-p} R^p):= \\
        &\biggl( s  + \delta_1 \biggl(\frac{\omega}{4}\biggl)^{q+1-p}e^{\frac{6^p}{\delta_1} \bigl(\frac{\epsilon_1}{2^{s_1}} \bigl)^{q+1-p}} R^p, s + \delta_1 \biggl(\frac{\omega}{4}\biggl)^{q+1-p}e^{\frac{8^p}{\delta_1}\bigl(\frac{\epsilon_1}{2^{s_1}} \bigl)^{q+1-p}} R^p\biggl) \\
        \text{and} \\
        &(s+a_2 \omega^{q+1-p} R^p,s+b_2\omega^{q+1-p}R^p) := \\
        &\biggl(s + \biggl(\frac{3}{4} \biggl)^{2-p} \omega^{q+1-p} \delta_2 e^{ \bigl(\frac{\epsilon_2}{2^{s_2}} \bigl)^{2-p} \frac{6^p}{\delta_2}} R^p, s + \biggl(\frac{3}{4} \biggl)^{2-p} \omega^{q+1-p} \delta_2 e^{ \bigl(\frac{\epsilon_2}{2^{s_2}} \bigl)^{2-p}\frac{8^p}{\delta_2}} R^p\biggl) \, .
        \end{aligned}
    \end{equation*}
     and call $a:= \max\{a_1,a_2 \}$ and $b:= \min\{b_1,b_2 \}$. Observe that if $(a= a_1 \, \wedge \,b=b_1 )$ or $(a= a_2  \, \wedge \,  b=b_2)$ we would find that one of the intervals is entirely contained in the other. Therefore the non trivial cases to study are either $(a=a_1 \, \wedge b=b_2)$ or $(a=a_2 \,  \wedge b=b_1)$. In the first case we need to assure that, by appropriate choices one has  $a_1<b_2$; while in the second case we need to prove that we can choose $a_2 < b_1$ so that intervals intersection occurs, as shown in the following picture
    \begin{center}
    \begin{tikzpicture}
        \draw (-4.5,1) -- (3,1);
        \path (-6,0) -- (6,0);
        \path (0,3) -- (0,-3);
        \draw[dotted, red] (3,1) -- (3,-2);
        \draw[dotted, red] (4,-1) -- (4,-2);
        \node at (4, -0.7)[fill=white] {$s + b_2 \omega^{q+1-p} R^p$};
        \draw (-3,-1) -- (4,-1);
        \filldraw (-4.5,1) circle (2pt);
        \filldraw (3,1) circle (2pt);
        \node at (-4.5, 1.3) {$s + a_1 \omega^{q+1-p} R^p$};
        \node at (3, 1.3) {$s + b_1 \omega^{q+1-p} R^p$};
        \filldraw (-3,-1) circle (2pt);
        \filldraw (4,-1) circle (2pt);
        \node at (-3, -0.7) {$s + a_2 \omega^{q+1-p} R^p$};
        
        \draw[dotted, red] (-4.5,1) -- (-4.5,-2);
        \draw[dotted, red] (-3,-1) -- (-3,-2);
        \draw[red] (-3,-2) -- (3,-2);
        \filldraw[red] (-3,-2) circle (2pt);
        \filldraw[red] (3,-2) circle (2pt);
        \node at (-3,-2.3)[red] {$s+a \omega^{q+1-p}R^p$};
        \node at (3,-2.3) [red] {$s+b \omega^{q+1-p} R^p$};
    \end{tikzpicture}
\end{center}

\noindent We commence by considering that $(a=a_1 \, \wedge \,  b=b_2)$. Once we choose
    \begin{equation*}
        \delta_1 < \delta_2 \biggl( \frac{1}{4} \biggl)^{p-q-1} \biggl(\frac{3}{4} \biggl)^{2-p}
    \end{equation*}
    and
    \begin{equation*}
        \epsilon_2^{p-2} <  \biggl( \frac{8}{6} \biggl)^{p} \biggl(  \frac{\epsilon_1}{2^{s_1}} \biggl)^{p-q-1} \biggl( \frac{\delta_1}{\delta_2} \biggl) (2^{s_2})^{p-2}\, 
    \end{equation*}
    we get 
    \begin{equation*}
             \delta_1 \biggl(\frac{1}{4} \biggl)^{q+1-p} e^{\frac{6^p}{\delta_1}\bigl( \frac{\epsilon_1}{2^{s_1}} \bigl)^{q+1-p}} <\biggl( \frac{8}{6} \biggl)^{p}  \delta_2 \biggl(\frac{3}{4} \biggl)^{2-p} e^{\frac{8^p}{\delta_2}\bigl( \frac{\epsilon_2}{2^{s_2}} \bigl)^{2-p}}  \ 
    \end{equation*}
 and therefore $a_1<b_2$. On the other hand, having $a_2 <b_1$ means that
    \begin{equation*}
        \delta_2 \biggl(\frac{3}{4} \biggl)^{2-p} e^{\frac{6^p}{\delta_2}\bigl( \frac{\epsilon_2}{2^{s_2}} \bigl)^{2-p}} < \delta_1 \biggl(\frac{1}{4} \biggl)^{q+1-p} e^{\frac{8^p}{\delta_1}\bigl( \frac{\epsilon_1}{2^{s_1}} \bigl)^{q+1-p}} \, ,
    \end{equation*}
so, one just needs to choose
    \begin{equation*}
        \delta_2 < \delta_1 \biggl( \frac{1}{4} \biggl)^{q+1-p} \biggl(\frac{3}{4} \biggl)^{p-2}
    \end{equation*}
    and
    \begin{equation*}
        \epsilon_1^{p-q-1} < \biggl( \frac{8}{6} \biggl)^{p} \biggl(  \frac{\epsilon_2}{2^{s_2}} \biggl)^{p-2} \biggl( \frac{\delta_2}{\delta_1} \biggl) (2^{s_1})^{p-q-1} \, .
    \end{equation*}

 \vspace{.2cm}
 
 \noindent Under the previous choices, one obtains 
    \begin{equation*}
        u \geq \eta_{*} \omega \qquad \text{in $K_{2R}(x_0) \times (s+a \omega^{q+1-p}R^p,s+b\omega^{q+1-p} R^p)$} \, 
    \end{equation*}
and by choosing $s=t_0- b \omega^{q+1-p} R^p$ and taking $c= b-a$, we arrive at
\begin{equation*}
u \geq \eta_{*} \omega \qquad \text{in} \  K_{2R}(x_0) \times (t_0 - c \omega^{q+1-p}R^p,t_0)
    \end{equation*}
hence,  
\begin{equation*}
    \essosc_{K_R(x_0) \times (t_0 - c \omega^{q+1-p} R^p,t_0)} u\leq  (1- \eta_{*}) \omega =: \bar{\eta} \omega \, .
    \end{equation*} 

\vspace{.2cm}

  \begin{remark}
    \label{rmk}
      Observe that both $a$, $b \geq 1$. In fact, if for example we consider $b =b_2 < 1$, then
        \begin{equation*}
            \begin{aligned}
                &\biggl(\frac{3}{4} \biggl)^{2-p} \delta_2 e^{ \bigl(\frac{\epsilon_2}{2^{s_2}} \bigl)^{2-p} \frac{8^p}{\delta_2}} < 1 \longleftrightarrow
                 e^{ \bigl(\frac{\epsilon_2}{2^{s_2}} \bigl)^{2-p} \frac{8^p}{\delta_2}} < \biggl( \frac{3}{4} \biggl)^{p-2} \frac{1}{\delta_2} \Leftrightarrow
                 \bigl(\frac{\epsilon_2}{2^{s_2}} \bigl)^{2-p} 8^p < \delta_2 \ln \biggl( \biggl( \frac{3}{4} \biggl)^{p-2} \frac{1}{\delta_2} \biggl) < 1 \, ,
            \end{aligned}
        \end{equation*}
        which is an absurd. The other cases can be treated in a similar way.
\end{remark}
        \vspace{.2cm}
        
    \noindent We are now in position to prove the following reduction of oscillation.
    \begin{theorem}
    \label{osc}
        Let $u$ be a non negative, locally bounded, local weak solution to \eqref{EQ} in $\Omega_T$ and let $(x_0,t_0)$ be an interior point of the domain. Then there exist positive constants $\epsilon_0$, $\alpha \in (0,1)$ and $b$, $\gamma >1$, depending only on the data, such that, for every radius $0<r<R$,
        \begin{equation}
            \label{eqosc}
            \essosc_{K_r(x_0) \times (t_0- \bar{\omega}^{q+1-p} r^p,t_0)} \leq \gamma \  \bar{\omega} \  \biggl(\frac{r}{R} \biggl)^{\alpha} \, ,
        \end{equation}
        where $\bar{\omega}= \max\{\omega, (2b)^{\frac{1}{p-q-1}}R^{\frac{\epsilon_0}{p-q-1}} \}$.
    \end{theorem}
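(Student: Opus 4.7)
The plan is to iterate the one-step oscillation reduction established in the discussion preceding the theorem, and then interpolate at a generic radius $r\in(0,R)$. I introduce the geometric sequences $R_n:=8^{-n}R$ and $\omega_n:=\bar{\eta}^{n}\bar{\omega}$ for $n\geq 0$, together with the telescoping family of cylinders
\[
C_0 := \tilde{Q}, \qquad C_n := K_{R_n}(x_0)\times\bigl(t_0 - c\,\omega_{n-1}^{q+1-p}R_n^p,\, t_0\bigr)\ \text{for } n\geq 1,
\]
where $c=b-a$ (cf.\ Remark~\ref{rmk}) is the positive universal constant obtained by combining the two expansion of positivity arguments of Section~3, taken to satisfy $c\geq 1$ up to a harmless reduction of the parameters in the expansion of positivity.

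The key preliminary is to pin down the exponents so that the intrinsic scaling survives iteration. I would set
\[
\alpha := \frac{|\log\bar{\eta}|}{\log 8}, \qquad \epsilon_0 := (p-q-1)\alpha,
\]
which makes $\bar{\eta}^{p-q-1}\cdot 8^{\epsilon_0}=1$ and therefore renders the scale-invariant quantity $\omega_n^{p-q-1}/R_n^{\epsilon_0}$ constant in $n$, equal to $\bar{\omega}^{p-q-1}/R^{\epsilon_0}\geq 2b$ by the very definition of $\bar{\omega}$. In particular, the lower bound $\omega_n^{p-q-1}\geq 2b R_n^{\epsilon_0}$ is propagated at every scale, which is exactly what allows the starting time level $s_n=t_0-b\,\omega_n^{q+1-p}R_{n+1}^p$ of the reduction to sit strictly inside the previously controlled cylinder $C_n$.

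The heart of the argument is then the induction $\essosc_{C_n}u\leq\omega_n$. The base case $n=0$ is the trivial bound $\essosc_{\tilde{Q}}u\leq\omega\leq\bar{\omega}$. For the inductive step, I apply the one-step reduction with $(R,\omega)$ replaced by $(R_{n+1},\omega_n)$ and the initial box taken as $C_n$: the output is precisely $\essosc u\leq\bar{\eta}\omega_n=\omega_{n+1}$ on $K_{R_{n+1}}(x_0)\times(t_0-c\,\omega_n^{q+1-p}R_{n+1}^p,t_0)=C_{n+1}$.

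For a generic $r\in(0,R)$, I pick the unique $n\geq 0$ with $R_{n+1}<r\leq R_n$ and check by direct comparison of the time depths that the target cylinder $K_r(x_0)\times(t_0-\bar{\omega}^{q+1-p}r^p,t_0)$ lies inside $C_n$ (using $\omega_{n-1}\leq\bar{\omega}$ and $c\geq 1$ when $n\geq 1$, and $\bar{\omega}^{p-q-1}\geq 2bR^{\epsilon_0}$ when $n=0$). The induction then delivers
\[
\essosc_{K_r(x_0)\times(t_0-\bar{\omega}^{q+1-p}r^p,t_0)}u \leq \omega_n = \bar{\omega}\cdot 8^{-n\alpha} \leq 8^{\alpha}\,\bar{\omega}\,(r/R)^\alpha,
\]
which is \eqref{eqosc} with $\gamma=8^\alpha$. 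The main obstacle, and the very reason behind the peculiar form of $\bar{\omega}$, is the rigidity of the intrinsic scaling: the equality $\bar{\eta}^{p-q-1}\cdot 8^{\epsilon_0}=1$ forces $\epsilon_0$ in terms of the reduction factor $\bar{\eta}$, and the additive term $(2b)^{1/(p-q-1)}R^{\epsilon_0/(p-q-1)}$ inside $\bar{\omega}$ is precisely what keeps $\omega_n^{p-q-1}\geq 2b R_n^{\epsilon_0}$ consistent at every scale, including the regime where the bare oscillation $\omega$ is too small to support the intrinsic setup by itself.
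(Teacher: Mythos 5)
Your overall strategy is the same as the paper's (iterate the one-step reduction of oscillation coming from the two alternatives, then interpolate at a generic radius $r$), and your idea of tuning $\epsilon_0$ so that $\bar{\eta}^{\,p-q-1}\,8^{\epsilon_0}=1$, which makes $\omega_n^{p-q-1}/R_n^{\epsilon_0}$ constant and equal to $\bar{\omega}^{p-q-1}/R^{\epsilon_0}\ge 2b$, is a legitimate way to dispense with the max-recursion $\omega_i=\max\{\bar{\eta}\omega_{i-1},(2b)^{\frac{1}{p-q-1}}R_{i-1}^{\frac{\epsilon_0}{p-q-1}}\}$ and the dichotomy \eqref{dicn1}--\eqref{dicn2} that the paper runs at every scale. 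However, your inductive step has a genuine gap. The invariance of $\omega_n^{p-q-1}/R_n^{\epsilon_0}$ only guarantees that the intrinsic cylinder at scale $n$ fits inside the non-intrinsic cylinder $K_{R_n}(x_0)\times(t_0-R_n^{p-\epsilon_0},t_0)$; it does \emph{not} place the machinery of Section 3 inside $C_n$, contrary to what you assert. For $n\ge1$ your induction controls the oscillation only on the intrinsic cylinder $C_n$, which (precisely because of that invariant, and since $c<2b$) is \emph{shallower} than the non-intrinsic one; so to run the alternative and the two expansions of positivity at scale $R_{n+1}$ with oscillation $\omega_n$ you must check that the whole region they use, namely $K_{8R_{n+1}}(x_0)\times(t_0-b\,\omega_n^{q+1-p}R_{n+1}^p,\,t_0)$, is contained in $C_n$. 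With your fixed shrinking factor $R_{n+1}=R_n/8$ the spatial part matches exactly, but the temporal part amounts to $b\,\bar{\eta}^{-(p-q-1)}\le c\,8^p$, i.e.\ a quantitative lower bound on $c=b-a$ in terms of $b$. This inequality is neither stated nor proved in your argument and does not follow from the invariant you emphasize; likewise your claim that $c\ge 1$ can be arranged ``up to a harmless reduction of the parameters'' (which you also need for the final containment $K_r(x_0)\times(t_0-\bar{\omega}^{q+1-p}r^p,t_0)\subset C_n$) is unsubstantiated: $a$ and $b$ are outputs of Propositions \ref{CLU1} and \ref{CLU2} and of the matching of the two time windows, and beyond $a<b$ and $a,b\ge 1$ (Remark \ref{rmk}) you have no control on the gap $b-a$.

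The paper avoids exactly this point by not fixing the shrinking factor in advance: it takes $R_i=\lambda^i R$ with $\lambda=(2b)^{-1/p}\,\bar{\eta}^{\frac{p-q-1}{p}}\,c^{1/p}$, so that $(2b)\,\omega_{n+1}^{q+1-p}R_{n+1}^p\le c\,\omega_n^{q+1-p}R_n^p$ holds by construction and the nesting of the cylinders $Q_{n+1}$ inside the region where the oscillation was just reduced requires no unverified relation between $b$ and $c$; the price is the max-recursion for $\omega_i$, the step-by-step dichotomy, the smallness condition $\bar{\eta}\le(\lambda/2)^{\frac{\epsilon_0}{p-q-1}}$ and the exponent $\alpha=\min\{\alpha_1,\frac{\epsilon_0}{p-q-1}\}$. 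Your scheme becomes correct if you either prove the missing inequality $b\le c\,8^p\,\bar{\eta}^{\,p-q-1}$ together with an honest treatment of the constant $c$ in the interpolation step, or, more simply, keep your scale-invariant choice of $\epsilon_0$ but replace the factor $1/8$ by the data-dependent $\lambda$ of the paper (with $8$ replaced by $1/\lambda$ in the definition of $\alpha$).
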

    \begin{proof}
        The proof of this result relies on an iterative scheme which will be presented into several steps as follows.
        
        \vspace{0.5 cm}
    
        \noindent \textbf{STEP 1}: Consider the two cylinders introduced in Section 2 
        \[ \tilde {Q}=K_{8R}(x_0) \times (t_0- R^{p-\epsilon_0},t_0) \quad \text{and} \quad Q=K_R(x_0) \times (t_0-A \omega^{q+1-p} R^p,t_0) \ \] and assume that   $Q \subset \bar{Q}$, meaning  
        \begin{equation}
            \label{eq28}
            \omega > A^{\frac{1}{p-q-1}} R^{\frac{\epsilon_0}{p-q-1}} \, .
        \end{equation}
        Then we have 
        \[  \essosc_{Q} u \leq \essosc_{\tilde{Q}}  u = \omega  \]
        and this is the starting point of the procedure of get the reduction of the oscillation: by procceding as described in Section 3 and taking $A=2b$, we arrive at
        \begin{equation}
            \label{redosc}
            \essosc_{K_R(x_0) \times (t_0 - c \omega^{q+1-p} R^p,t_0)} u\leq  (1- \eta_{*}) \omega =: \bar{\eta} \omega
        \end{equation}
        
        \vspace{0.5 cm}
        
        \noindent \textbf{STEP 2}: Define $\omega_1 := \max\{\bar{\eta} \omega, (2b)^{\frac{1}{p-q-1}}R^{\frac{\epsilon_0}{p-q-1}} \}$ and $R_1: = \lambda R$ for $\lambda := (2b)^{-\frac{1}{p}} \ \bar{\eta}^{\frac{p-q-1}{p}} \ c ^{\frac{1}{p}} \in (0,1)$. Then 
        \begin{equation}
            \label{eq30b}
            \begin{aligned}
                K_{R_1}(x_0) \times (t_0-(2b) \omega_1^{q+1-p} R_1^p ,t_0) \subset K_R(x_0) \times (t_0- c\omega^{q+1-p} R^p,t_0) \, .
            \end{aligned}
        \end{equation}
        and therefore, due to \eqref{redosc}, 
        \begin{equation}
            \label{eq31}
            \essosc_{K_{R_1}(x_0) \times (t_0 -(2b) \omega_1^{q+1-p} R_1^p ,t_0)} u \leq \essosc_{K_R (x_0) \times (t_0 - c\omega^{q+1-p} R^p,t_0) } u \leq \bar{\eta} \omega \leq \omega_1 \, .
        \end{equation}
        Observe that if \eqref{eq28} fails then, by recalling the definition of $\omega_1$, 
        \begin{equation*}
            (2b)\omega_1^{q+1-p} R_1^p \leq \lambda^p R^{p-\epsilon_0}\leq R^{p-\epsilon_0}
        \end{equation*}
        and we get
        \begin{equation}
        \essosc_{K_{R_1}(x_0) \times (t_0-(2b) \omega_1^{q+1-p} R_1^p ,t_0)} u \leq  \omega \leq (2b)^{\frac{1}{p-q-1}} R^{\frac{\epsilon_0}{p-q-1}} \leq \omega_1 \, .
        \end{equation}
        
        \vspace{0.5 cm}
        
        \noindent \textbf{STEP 3}: Define for, $i \in \N$,
        \begin{equation*}
        \begin{aligned}
            R_i = \lambda^i R, \quad \omega_0 &= \omega , \quad \omega_i = \max\{\bar{\eta}\omega_{i-1}, (2b)^{\frac{1}{p-q-1}} R_{i-1}^{\frac{\epsilon_{0}}{p-q-1}} \} \\
            &Q_i = K_{R_{i}(x_0)} \times (t_0-(2b) \omega_i^{q-1-p}R_i^p,t_0) \, 
            \end{aligned}
        \end{equation*}
        where $\left\{Q_i\right\}_{i\in \N}$ defines a sequence of nested and shrinking cylinders since 
        \begin{equation*}
            (2b)\omega_{n+1}^{q+1-p} R_{n+1}^p \leq (2b) \bar{\eta}^{q+1-p}\omega_n^{q+1-p} \lambda^p R_n^p =  c \omega_n^{q+1-p} R_n^p \leq (2b) \omega_n^{q+1-p} R_n^p  \ . 
        \end{equation*}
        We want to prove (by induction) that, $\forall \, i \in \N$,
        \begin{equation}
            \label{eq32}
            \essosc_{Q_i} \leq \omega_i \, ,
        \end{equation}
       The case $n=1$ is precisely \eqref{eq31}. Suppose now that \eqref{eq32} holds for $i=1,\dots,n$. Consider that, for some time level $s$, either 
        \begin{equation*}
            \abs*{K_{R_n}(x_0) \cap \{u(x,s) > \frac{\omega_n}{4}\} }\geq \frac{1}{2}\abs*{K_{R_n}(x_0)}
        \end{equation*}
       or
        \begin{equation*}
            \abs*{K_{R_n}(x_0) \cap \{u(x,s) > \frac{\omega_n}{4}\} }\leq \frac{1}{2}\abs*{K_{R_n}(x_0)} \, .
        \end{equation*}
       which corresponds to \eqref{alt1} or \eqref{alt2}, with $R=R_n$ and $\omega=\omega_n$. By arguing as in Section 3 (note that  all the constants $\epsilon_{1}$, $\epsilon_2$, $\delta_1$, $\delta_2$, $s_1$, $s_2$, $\eta_1$ and $\eta_2$ remain the same, because they depend only on the data), we find
        \begin{equation}
        \label{eq33}
            u \geq \eta_{*} \omega_n \qquad \text{in $K_{2R_n} (x_0)\times (s+a\omega_n^{q+1-p} R_n^p, s+b\omega_n^{q+1-p} R_n^p)$} \, ,
        \end{equation}
        for some $s$, that is to be chosen. 
        
        \noindent On the time interval $(t_0 -(2b) \omega_n^{q+1-p}R_n^p,t_0) $, we have the following dichotomy, either
        \begin{equation}
            \label{dicn1}
            (2b)\omega_n^{q+1-p}R_n^p < R_n^{p-\epsilon_0} \, ,
        \end{equation}
        or
        \begin{equation}
            \label{dicn2}
            (2b)\omega_n^{q+1-p}R_n^p \geq R_n^{p-\epsilon_0} \, .
        \end{equation}

        \vspace{0.5 cm}
        \noindent \textbf{STEP 4}: Assume that \eqref{dicn1} holds. Then $K_{R_n}(x_0) \times(t_0-(2b)\omega_n^{q+1-p}R_n^p,t_0) \subset K_{R_n}(x_0) \times (t_0-R_n^{p-\epsilon_0},t_0) \subset \tilde{Q} \subset \Omega_T$. Reasoning as before we take $s= t_0 + b \omega_n^{q+1-p} R_n^p$ and $c=b-a$, and then 
        \begin{equation*}
            Q_{n+1} \subset K_{R_{n}(x_0)} \times (t_0 -c\omega_n^{q+1-p} R_n^p,t_0) \subset K_{2R_n}(x_0) \times (s+a\omega_n^{q+1-p} R_n^p, s+b\omega_n^{q+1-p} R_n^p)
        \end{equation*}
        and 
        \begin{equation}
            \label{eq34}
            \begin{aligned}
            \essosc_{Q_{n+1}} u &\leq \essosc_{K_{R_n} (x_0)\times (t_0-c \omega_n^{q+1-p}R_n^p,0)} u \leq (1- - \eta_{*} )\omega_n = \bar{\eta} \omega_n \leq \omega_{n+1} \, .
            \end{aligned}
        \end{equation}
        On the other hand, if \eqref{dicn2} holds, then $Q_{n} \subset K_{R_n}(x_0)  \times (t_0-R_n^{p-\epsilon_0},t_0) \subset \tilde{Q}$, and so
        \begin{equation*}
            \essosc_{Q_{n+1}} u \leq \essosc_{Q_n} u \leq \omega_n \leq (2b)^{\frac{1}{p-q-1}} R_n^{\frac{\epsilon_0}{p-q-1}} \leq \omega_{n+1} \, .
        \end{equation*}
        This proves \eqref{eq32}. 

        \vspace{0.5 cm}
        
         \noindent \textbf{STEP 5}: We now get to the final step of the proof: we prove  \eqref{eqosc} once we properly choose $\epsilon_0$ and fix $n \in \N$.
        
        \noindent From the definition of $\omega_n$ and by choosing $\epsilon_0$ so small, so that $\bar{\eta} \leq \big( \frac{\lambda}{2}\big)^{\frac{\epsilon_0}{p-q-1}}$, we get 
        \begin{eqnarray}\label{eq35}
            \omega_{n+1} & \leq & \bar{\eta} \omega_n + (2b)^{\frac{1}{p-q-1}} R_n^{\frac{\epsilon_0}{p-q-1}} \leq \dots \nonumber \\
            & \leq &  \bar{\eta}^n \omega  + (2b)^{\frac{1}{p-q-1}} R^{\frac{\epsilon_0}{p-q-1}}\sum_{i=0}^{n-1} \bar{\eta}^{i} \lambda^{(n-i)\frac{\epsilon_0}{p-q-1}} \nonumber  \\
            & \leq & \bar{\eta}^n \omega + (2b)^{\frac{1}{p-q-1}} R^{\frac{\epsilon_0}{p-q-1}}  \lambda^{n \frac{\epsilon_0}{p-q-1}} \sum_{i=0}^{n-1}  2^{-i \frac{\epsilon_0}{p-q-1}} \nonumber  \\
            \omega_{n+1} & \leq & \bar{\eta}^n \omega + \gamma (2b)^{\frac{1}{p-q-1}} R^{\frac{\epsilon_0}{p-q-1}} \lambda^{n \frac{\epsilon_0}{p-q-1}} \ ,  \qquad \gamma= \gamma(p,q,N) \ .
        \end{eqnarray}
Now, take $r \in (0,R)$ and fix $n \in \N$ big enough so that $\lambda^{n+2}R<r<\lambda^{n+1} R$. Then, by defining $\alpha_1 = \frac{\ln(\bar{\eta})}{\ln(\lambda)} \in (0,1)$ we get
        \begin{equation*}
            \bar{\eta}^{n} \leq \bar{\eta}^{n} \biggl(  \frac{r}{\lambda^{n+2}R}\biggl)^{\alpha_1} = \frac{1}{\bar{\eta}^2} \biggl( \frac{r}{R}\biggl)^{\alpha_1} \, .
        \end{equation*}
        Moreover, using $\lambda^{n+2}<\frac{r}{R}$
        \begin{equation*}
             \gamma (2b)^{\frac{1}{p-q-1}} R^{\frac{\epsilon_0}{p-q-1}} \lambda^{n \frac{\epsilon_0}{p-q-1}} \leq \frac{\gamma}{\lambda^{\frac{2 \epsilon_0}{p-q-1}}} R^{\frac{\epsilon_0}{p-q-1}} (2b)^{\frac{1}{p-q-1}} \biggl( \frac{r}{R}\biggl)^{\frac{\epsilon_0}{p-q-1}} \, .
        \end{equation*}
        Combining these two inequalities together with \eqref{eq32} and \eqref{eq35} and defining $\bar{\omega} : = \max\{\omega, (2b)^{\frac{1}{p-q-1}} R^{\frac{\epsilon_0}{p-q-1}}\} > \omega_n$ , for all $ n \in \N$, and taking $\alpha = \min\{\alpha_1, \frac{\epsilon_0}{p-q-1}\}$
        \begin{equation*}
        \begin{aligned}
            \essosc_{K_r (x_0)\times (t_0-\bar{\omega}^{q+1-p}r^p,t_0)} u & \leq \essosc_{K_{R_{n+1}}(x_0) \times (t_0-\omega_{n+1}^{q+1-p}R_{n+1}^p,t_0)} u \leq \essosc_{Q_{n+1}} u \leq \omega_{n+1} \\
            & \leq \frac{\omega}{\bar{\eta}^2} \biggl( \frac{r}{R}\biggl)^{\alpha_1} + \frac{\gamma}{\lambda^{\frac{2\epsilon_0}{p-q-1}}} R^{\frac{\epsilon_0}{p-q-1}} (2b)^{\frac{1}{p-q-1}} \biggl( \frac{r}{R}\biggl)^{\frac{\epsilon_0}{p-q-1}} \\
            & \leq \left[\frac{1}{\bar{\eta}^2} \biggl( \frac{r}{R}\biggl)^{\alpha_1} + \frac{\gamma}{\lambda^{\frac{2\epsilon_0}{p-q-1}}}  \biggl( \frac{r}{R}\biggl)^{\frac{\epsilon_0}{p-q-1}} \right] \bar{\omega} \\
            & \leq  \bar{\gamma} \,\bar{\omega} \biggl( \frac{r}{R}\biggl)^{\alpha}
            \end{aligned}
        \end{equation*}
        
    \end{proof}

    \begin{remark}
    \label{dibene}
        We give a simple yet useful observation: as in the $p$-Laplacian case, the proof of Theorem \ref{osc} continues to work if we just know that the intrinsic cylinder satisfy
        \begin{equation*}
            K_R(x_0) \times (t_0 - (2b) \omega^{q+1-p} R^p, t_0) \subset \Omega_T \, .
        \end{equation*}

        \noindent The role of the big cylinder $K_{8R}(x_0) \times (t_0-R^{p-\epsilon_0},t_0)$ is just to make sure that in the intrinsic cylinders it holds a control of the oscillation
        \begin{equation*}
            \essosc_{K_{R}(x_0) \times (t_0-(2b)\omega^{q+1-p}R^p,t_0)} u \leq \omega \, .
        \end{equation*}
    \end{remark}

    \noindent The previous theorem allows us to prove the main result.
    \begin{proof}[{\bf Proof of Theorem \ref{holder}}]
        As it it well known, Theorem \ref{osc} allows to show that $u$ is locally H\"older continuous.  To be more precise, let $U_{\tau}$ be a compact subset of $\Omega_T$, call $M:= \norm*{u}_{L^{\infty}(\Omega_T)}$ and fix $(x_1,t_1)$, $(x_2,t_2) \in U_{\tau}$, with $t_2 > t_1$. Define the intrinsic parabolic distance of $U_{\tau}$ to the parabolic boundary of $\Omega \times (0,T]$ as
        \begin{equation*}
        \begin{aligned} 
            (p,q)-\text{dist}:= \inf_{\substack{(y,s) \in \partial_p \Omega \times (0,T]\\ (x,t) \in U_{\tau}}
            } \{\abs*{x-y} + \frac{M^{\frac{p-q-1}{p}}}{(2b)^{\frac{1}{p}}}\abs*{t-s}^{\frac{1}{p}} \} = 2R \ , 
            \end{aligned}
        \end{equation*}
       \[\bar{M}: = \max \{M, (2b)^{\frac{1}{p-q-1}} (2R)^{\frac{\epsilon_0}{p-q-1}} \}\] and construct the cylinder 
        \begin{equation}
        \label{cond11}
            K_R (x_2) \times (t_2 - (2b)M^{q+1-p} R^p,t_2) \subset \Omega_T \, .
        \end{equation}

\noindent The proof of the H\"older continuity will be performed separately in time and space.

\vspace{.1cm}

        \noindent Assume $t_2 - t_1\geq \bar{M}^{q+1-p}R^p$, then
        \begin{equation*}
            \abs*{u(x_2,t_1) - u(x_2,t_2)} \leq 2M \leq 4\bar{M} \biggl(\frac{R}{(p,q)-\text{dist}} \biggl)^{\alpha} \leq 4\bar{M} \biggl( \frac{\bar{M}^{\frac{p-q-1}{p}}\abs*{t_1-t_2}^{\frac{1}{p}}}{(p,q)-\text{dist}} \biggl)^{\alpha} \, .
        \end{equation*}
        If  $t_2 - t_1 < \bar{M}^{q+1-p}R^p$ consider $\abs*{t_2-t_1} =: \bar{M}^{q+1-p} r^p$. Thanks to condition \eqref{cond11} and Remark \ref{dibene} we can apply Theorem \ref{osc} to have
        \begin{equation*}
            \abs*{u(x_2,t_1)-u(x_2,t_2)} \leq \essosc_{K_r(x_2) \times (t_2-\bar{M}^{q+1-p}r^p,t_2)} u \leq \bar{\gamma} \bar{M} \biggl(\frac{\bar{M}^{p-q-1}\abs*{t_2-t_1}^{\frac{1}{p}}}{(p,q)-\text{dist}} \biggl)^{\alpha} \, .
        \end{equation*}

        \noindent As for the H\"older continuity in space, we start by  considering that $\abs*{x_1-x_2} < R$, therefore $\abs*{x_1-x_2} = r$, $0<r<R$ Then, by Theorem \ref{osc}
        \begin{equation*}
            \abs*{u(x_1,t_2) - u(x_2,t_2)} \leq \essosc_{K_r(x_2) \times (t_2-\bar{M}^{q+1-p}r^p,t_2)} u \leq \bar{\gamma} \bar{M} \biggl(\frac{\abs*{x_1-x_2}}{(p,q)-\text{dist}} \biggl)^{\alpha} \, .
        \end{equation*}
        Finally, if $\abs*{x_1-x_2} \geq R$,
        \begin{equation*}
            \abs*{u(x_1,t_2) - u(x_2,t_2)} \leq 2M \leq 4\bar{M} \biggl(\frac{R}{(p,q)-\text{dist}} \biggl)^{\alpha} \leq 4\bar{M} \biggl( \frac{\abs*{x_1-x_2}}{(p,q)-\text{dist}} \biggl)^{\alpha} \, .
        \end{equation*}
        
    \end{proof}


    \section*{APPENDIX: MOLLIFICATION}

    \noindent Along the proofs of Lemma \ref{LemmaEE} and Proposition \ref{CLU2}, we justified the usage of certain test functions by considering a mollified version of the weak formulation in which these were admissible test functions. Such a procedure is widely known in parabolic regularity theory (see for instances \cite{BDL} or \cite{Henriques 2022}) and is formally justified by the following reasoning.

    \vspace{.2cm}
    
    \noindent For all $v :  \Omega_T \to \R$ and all $h \in (0,1)$, we define a finite convolution of $v$ with exponential weight functions by setting
    \begin{equation*}
        [v]_h (x,t) = \int_0^t \frac{v(x,\tau)}{h} e^{\frac{\tau-t}{h}} d\tau \ , \qquad (x,t) \in \Omega_T
    \end{equation*}
    which satisfy the following properties (see  \cite{KL} for details).
    \begin{proposition}
    \label{propmol}
        Let $v \in L^q (\Omega_T)$ for some $q \in [1, +\infty)$. Then for all $h \in (0,1)$ we have:
        \begin{enumerate}
            \item{$[v]_h$ is differentiable in $t$ with
            \begin{equation*}
                \frac{\partial [v]_h}{\partial t} (x,t)= \frac{v(x,t)-[v]_h(x,t)}{h} ;
            \end{equation*}}
            \item{$[v]_h \in L^q (\Omega_T)$ and $[v]_h \to v$ in $L^q(\Omega_T)$ as $h \to 0^+$; }
            \item{If, in addition, $Dv \in L^q(\Omega_T)$, then $[Dv]_h = D[v]_h$ componentwise and $[Dv]_h \to Dv$ in $L^q (\Omega_T)$ as $h \to 0^+$;}
        \end{enumerate}
    \end{proposition}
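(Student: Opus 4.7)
The three items split naturally, so I would treat them in order.

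For (i), I would factor out the $t$-dependence of the exponential weight by writing $[v]_h(x,t) = e^{-t/h}\int_0^t h^{-1} e^{\tau/h} v(x,\tau)\,d\tau$. Applying the product rule, differentiating the outer exponential produces $-h^{-1}[v]_h(x,t)$, while the fundamental theorem of calculus applied to the integral yields $e^{-t/h}\cdot h^{-1} e^{t/h} v(x,t)=h^{-1}v(x,t)$. Adding the two gives the identity in (i). For general $v\in L^q$ this is classical, with the fundamental theorem replaced by differentiation at Lebesgue points in $t$.

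For (ii), the essential estimate is the contractivity $\|[v]_h\|_{L^q(\Omega_T)}\leq\|v\|_{L^q(\Omega_T)}$. I would obtain it by noting that the kernel $K_h(t,\tau):=h^{-1}e^{(\tau-t)/h}\mathbf{1}_{(0,t)}(\tau)$ satisfies $\int_0^t K_h(t,\tau)\,d\tau = 1-e^{-t/h}\leq 1$, applying Jensen's inequality against this sub-probability measure to pull the $q$-th power inside, and then using Fubini to swap the order of integration in $\tau$ and $t$; the inner $t$-integral $\int_\tau^T h^{-1}e^{(\tau-t)/h}\,dt\leq 1$ closes the estimate. Membership $[v]_h\in L^q(\Omega_T)$ is immediate. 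For the convergence $[v]_h\to v$ in $L^q(\Omega_T)$, I would use a density argument: for continuous $v$ with compact support in $\Omega_T$, the change of variables $s=(t-\tau)/h$ rewrites $[v]_h(x,t)=\int_0^{t/h} v(x,t-sh)e^{-s}\,ds$, and dominated convergence (with envelope $\|v\|_\infty e^{-s}$) together with $\int_0^\infty e^{-s}\,ds=1$ yields pointwise convergence to $v(x,t)$; a second dominated convergence argument on $\Omega_T$ promotes this to $L^q$ convergence. For general $v\in L^q(\Omega_T)$, choose $v_n$ continuous with compact support with $v_n\to v$ in $L^q$, and use the standard $\varepsilon/3$ decomposition
\[
\|[v]_h-v\|_{L^q}\leq\|[v-v_n]_h\|_{L^q}+\|[v_n]_h-v_n\|_{L^q}+\|v_n-v\|_{L^q}\leq 2\|v-v_n\|_{L^q}+\|[v_n]_h-v_n\|_{L^q},
\]
where the first bound uses the contractivity just proved.

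For (iii), I would verify the commutation $D[v]_h=[Dv]_h$ in the sense of distributions first: for any $\varphi\in C_c^\infty(\Omega)$ and fixed $t$, the integral $\int_\Omega [v]_h(\cdot,t)\,D\varphi\,dx$ can be expanded by definition of $[v]_h$ and, by Fubini, reduced to a time integral of the spatial pairings $\int_\Omega v(\cdot,\tau)\,D\varphi\,dx$, to which the assumption $Dv\in L^q(\Omega_T)$ lets us integrate by parts; reassembling yields $\int_\Omega [Dv]_h(\cdot,t)\,\varphi\,dx$. This identifies the weak spatial gradient of $[v]_h$ with $[Dv]_h$ componentwise. The $L^q$-convergence $[Dv]_h\to Dv$ is then just (ii) applied to the component functions of $Dv$.

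The only mild subtlety I expect is near the initial time $t=0$, where $[v]_h(x,0)=0$ regardless of $v$, so convergence cannot be uniform in $t$; however, the contribution of any small slab $(0,\delta)$ to the $L^q$ norm of $[v]_h-v$ is $O(\delta^{1/q})$ uniformly in $h$ by the contractivity, and can be absorbed into the $\varepsilon$-argument. Beyond that, every step is routine manipulation of Fubini, Jensen, and dominated convergence.
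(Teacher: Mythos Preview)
Your proof is correct and complete: the product-rule computation for (i), the Jensen--Fubini contractivity plus density argument for (ii), and the distributional commutation for (iii) are all standard and sound, and your remark about the slab near $t=0$ correctly handles the only point where care is needed. Note, however, that the paper does not actually prove this proposition; it simply states the properties and refers the reader to \cite{KL} (Kinnunen--Lindqvist) for details, so there is no in-paper argument to compare against. Your write-up is essentially the textbook proof one would expect to find in that reference.
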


    \noindent The following lemma provides a rigorous proof of the formulation \eqref{mollified1} in Lemma \ref{LemmaEE}. Without loss of generality, we provide a proof for weak solution to \eqref{EQ}, but the reasoning can be adapted to include the case of equation \eqref{EQ2}.
    \begin{lemma}
    \label{mollification1}
        Let $u$ be a non negative, local weak solution to \eqref{EQ}. Then, for all $\phi \in L^p_{\text{loc}}(0,T; H_0^{1,p}(K))$ we have
        \begin{equation}
           \int_0^T \int_K \frac{\partial}{\partial t}[u^q]_h(x,t) \phi(x,t) \, dx dt + \int_0^T \int_K [(Du)^{p-1}]_h D\phi \, dx dt = \int_K u^q (x,0) \biggl(\int_0^T \frac{\phi(x,t)e^{-\frac{s}{h}}}{h} \, ds \biggl) \, dx \, .
        \end{equation}
    \end{lemma}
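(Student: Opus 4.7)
The plan is to produce the mollified identity by testing the weak formulation of \eqref{EQ} with a carefully designed function that already carries the exponential kernel of $[\cdot]_h$, and then to integrate the resulting pointwise-in-$t$ identity over $(0,T)$. More precisely, for a.e.\ $t \in (0,T)$ (which is the full-measure set on which $\phi(\cdot,t) \in H^{1,p}_0(K)$, since $\phi \in L^p(0,T;H^{1,p}_0(K))$), I would take as test function on the sub-cylinder $K \times (0,t)$ the function
\[
\varphi(x,\tau) := \phi(x,t)\,\frac{1}{h}\,e^{(\tau-t)/h} \ ,
\]
which vanishes on the lateral boundary of $K$ and is $C^\infty$ in $\tau$; hence it belongs to the test function class $H^{1,q+1}_{\text{loc}}(0,T;L^{q+1}(K)) \cap L^p_{\text{loc}}(0,T;H^{1,p}_0(K))$ required by \eqref{weak-dfn}.

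Substituting $\varphi$ into the weak formulation with $t_1 = 0$ and $t_2 = t$, the two endpoint boundary contributions become
\[
\frac{1}{h}\int_K u^q(x,t)\phi(x,t)\,dx - \frac{e^{-t/h}}{h}\int_K u^q(x,0)\phi(x,t)\,dx \ ,
\]
while the gradient and time-derivative contributions reorganise, once one recognises $\frac{1}{h}e^{(\tau-t)/h}$ and $\frac{1}{h^2}e^{(\tau-t)/h}$ as the mollifier kernels, into
\[
\int_K D\phi(x,t)\cdot[(Du)^{p-1}]_h(x,t)\,dx \qquad\text{and}\qquad \frac{1}{h}\int_K \phi(x,t)\,[u^q]_h(x,t)\,dx \ ,
\]
respectively. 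Invoking property (i) of Proposition \ref{propmol} in the form $(u^q - [u^q]_h)/h = \partial_t [u^q]_h$, the first boundary term combines with the time-derivative piece to yield the pointwise-in-$t$ identity
\[
\int_K \partial_t [u^q]_h(x,t)\,\phi(x,t)\,dx + \int_K [(Du)^{p-1}]_h(x,t)\cdot D\phi(x,t)\,dx = \frac{e^{-t/h}}{h}\int_K u^q(x,0)\,\phi(x,t)\,dx \ .
\]

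The conclusion then follows by integrating over $t \in (0,T)$ and applying Fubini on the right-hand side, the required integrability coming from $u \in L^\infty_{\text{loc}} \cap L^p_{\text{loc}}(0,T;H^{1,p}_{\text{loc}})$ and $\phi \in L^p(0,T;H^{1,p}_0(K))$. The step requiring the most care is twofold: checking the admissibility of $\varphi$ in the exact function space appearing in \eqref{weak-dfn} (which is really just a bookkeeping matter, since $\varphi$ is smooth in $\tau$ and inherits its spatial regularity from $\phi(\cdot,t)$), and giving meaning to the trace $u^q(\cdot,0)$. The latter is the more delicate point: it is legitimate because the definition of weak solution encodes $u \in C(0,T;L^{q+1}_{\text{loc}}(\Omega))$, so $u^q(\cdot,0)$ can be read as the $L^{q+1}_{\text{loc}}$-limit as $t \to 0^+$ (or, equivalently, one argues on a slightly shrunk cylinder $K \times (t_0,T)$ where the initial slice is a regular section, and then relabels).
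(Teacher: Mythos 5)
Your argument is correct, but it follows a genuinely different route from the paper. The paper proves the lemma by a double mollification: it tests the weak formulation with a time-mollified and time-shifted test function $[\phi]_l(x,\tau+s)$, changes variables, multiplies by $\tfrac{1}{h}e^{-s/h}$ and integrates in $s$, uses Fubini to transfer the exponential kernel onto $u^q$ and $\abs{Du}^{p-2}Du$ (producing $[u^q]_h$ and $[\abs{Du}^{p-2}Du]_h$), integrates by parts in time, and finally lets $l\to 0^+$ to recover $\phi$. You instead freeze $\phi$ at a time $t$ and test on $K\times(0,t)$ with $\varphi(x,\tau)=\phi(x,t)\,h^{-1}e^{(\tau-t)/h}$, so that the gradient and $u^q\varphi_\tau$ terms are literally the convolutions $[\abs{Du}^{p-2}Du]_h(x,t)$ and $h^{-1}[u^q]_h(x,t)$, and the endpoint term at $\tau=t$ combines with the latter via Proposition \ref{propmol}(i) into $\int_K\partial_t[u^q]_h\,\phi\,dx$; integrating the resulting pointwise-in-$t$ identity over $(0,T)$ gives the claim. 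Your version is more direct: it needs no auxiliary parameter $l$ and no limit $l\to0^+$, and it sidesteps the lack of time regularity of $\phi$ by freezing it in time rather than mollifying it (admissibility of $\varphi$ is indeed routine, since $\varphi$ is smooth in $\tau$ and $H^{1,p}_0(K)\subset L^{q+1}(K)$ on the bounded set $K$ because $q+1<p$). The paper's version, by contrast, keeps $\phi$ genuinely time-dependent throughout and handles it by the extra mollification. Both arguments share the same delicate point, namely the use of the initial slice $u^q(\cdot,0)$ (the weak formulation is stated only for a.e.\ $0<t_1<t_2<T$), and you correctly flag that this is justified by the continuity $u\in C(0,T;L^{q+1}_{\mathrm{loc}}(\Omega))$ or by working on a slightly shrunk time interval and relabelling; the integrability needed for the final Fubini step holds as you say (and in fact without local boundedness, since $\partial_t[u^q]_h\in L^{(q+1)/q}$ pairs with $\phi\in L^{q+1}$ and $[\abs{Du}^{p-2}Du]_h\in L^{p/(p-1)}$ pairs with $D\phi\in L^p$).
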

    \begin{proof}
        Write the weak formulation \eqref{weak-dfn} for $[\phi]_l (x,\tau+s)$, where $\phi$ is a test function, between $0$ and $T-s$, for some $s<T$,
        \begin{equation}
            \label{m1}
            \int_K u^q(x,\tau) [\phi]_l (\tau+s)\, dx \biggl|_{0}^{T-s} + \int_0^{T-s} \int_K \abs*{Du}^{p-2}Du (x,\tau) \cdot [D\phi]_l(x,\tau+s) - u^q(x,\tau) \partial_\tau [\phi]_{l} (x,\tau+s) \, dx d\tau = 0 \, .
        \end{equation}
        Now consider the change of variable $\tau=t-s$
        \begin{equation}
        \label{m2}
        \begin{aligned}
            &\int_s^{T} \int_K \abs*{Du}^{p-2}Du (x,t-s) \cdot [D\phi]_l(x,t) - u^q(x,t-s) \partial_t [\phi]_l (x,t) \, dx dt \\
            + & \int_K u^q(x,T-s) [\phi]_l(x,T)\, dx = \int_K u^q(x,0) [\phi]_l(x,s) \, dx \, .
        \end{aligned}
        \end{equation}
        from which, by multiplying both sides by $\frac{e^{-\frac{s}{h}}}{h}$ and integrating over $[0,T]$ with respect to $s$, we get
        \begin{equation}
        \label{m3}
            \begin{aligned}
                & \int_0^T \int_s^T \int_K \frac{\abs*{Du}^{p-2}Du (x,t-s) \cdot [D\phi]_l(x,t) e^{-\frac{s}{h}}}{h} - \frac{u^q (x,t-s) \partial_t [\phi]_l (x,t) e^{-\frac{s}{h}} }{h} \, dx dt ds \\
                + & \int_0^T\int_K u^q(x,T-s)  [\phi]_l(x,T) \frac{e^{- \frac{s}{h}}}{h}\, dx ds= \int_0^T\int_K u^q(x,0) [\phi]_l(x,s) \frac{e^{- \frac{s}{h}}}{h} \, dx ds  \\
                \Longrightarrow & I_1 -I_2 + I_3 = I_4 \, .
            \end{aligned}
        \end{equation}

        \noindent On $I_1$ apply Fubini-Tonelli's theorem and perform the change of variables $t-s=\tau'$ to get
        \begin{equation}
            \label{m4}
            \begin{aligned}
               I_1= & \int_0^T \int_0^t \int_K \frac{\abs*{Du}^{p-2}Du (x,t-s) \cdot [D\phi]_l(x,t) e^{-\frac{s}{h}}}{h} \, dx ds dt \\
                = & \int_0^T \int_K [D\phi]_l(x,t) \int_0^t \frac{\abs*{Du}^{p-2} Du (x,\tau')e^{\frac{\tau'-t}{h}}}{h} \, d\tau' \, dx dt \\
                =& \int_0^T \int_K [D\phi]_l(x,t) [\abs*{Du}^{p-2} Du]_h(x,t) \, dx dt \, .
            \end{aligned}
        \end{equation}
        On $I_2$ we repeat the previous passages
        \begin{equation}
            \label{m5}
            \begin{aligned}
                I_2=& \int_0^T \int_0^t \int_K \frac{u^q (x,t-s) \partial_t [\phi]_l(x,t) e^{-\frac{s}{h}}}{h} \, dx ds dt \\
                = & \int_0^T \int_K \partial_t [\phi]_{l}(x,t) \frac{1}{h} \int_0^t \frac{u^q(x,t-s) e^{-\frac{s}{h}}}{h} \, dx ds dt \\
                = & \int_0^T \int_K \partial_t \phi_l(x,t) [u^q]_h(x,t) \,dx dt \, .
            \end{aligned}
        \end{equation}
        On $I_3$ just perform the change of variables $T-s = \tau'$
        \begin{equation}
            \label{m6}
            \begin{aligned}
                I_3 = & \int_K [\phi]_l(x,T) \frac{1}{h} \int_0^T u^q (x,\tau') e^{\frac{\tau'-T}{h}} \, ds dx \\
                =& \int_K [\phi]_l(x,T) [u^q]_h(x,T) \, dx \, .
            \end{aligned}
        \end{equation}
        Putting  \eqref{m4}, \eqref{m5} and \eqref{m6} in \eqref{m3} we arrive at
        \begin{equation*}
        \begin{aligned}
            & \int_0^T \int_K [\abs*{Du}^{p-2}Du]_h [D\phi]_l \, dx dt - \int_0^T \int_K \partial_t[\phi]_l [u^q]_h \, dx dt \\
            + &\int_K [\phi]_l(x,T) [u^q]_h(x,T) \, dx = \int_K u^q(x,0) \int_0^T \frac{[\phi]_l(x,s) e^{-\frac{s}{h}}}{h} \, ds \, dx \ .
            \end{aligned}
        \end{equation*}
        Now use the differentiation by parts on the second term of the left hand side to get
        \begin{equation*}
        \begin{aligned}
            &\int_0^T \int_K [\abs*{Du}^{p-2} Du]_h [D\phi]_l \, dx dt + \int_0^T \int_K [\phi]_l \partial_t[u^q]_h \, dx dt \\
           -&  \int_K [\phi]_l [u^q]_h (x,T) \, dx dt + \int_K [\phi]_l [u^q](x,0) \, dx dt \\
          + &  \int_K [\phi]_l(x,T) [u^q]_h (x,T) \, dx = \int_K u^q(x,0) \int_0^T \frac{[\phi]_l(x,s) e^{-\frac{s}{h}}}{h} \, ds \, dx \, 
            \end{aligned}
        \end{equation*}
       and since $[\phi]_l (x,0) =0$, 
        \begin{equation*}
        \begin{aligned}
            &\int_0^T \int_K [\abs*{Du}^{p-2} Du]_h [D\phi]_l \, dx dt + \int_0^T \int_K [\phi]_l \partial_t[u^q]_h \, dx dt \\
           =& \int_K u^q(x,0) \int_0^T \frac{[\phi]_l(x,s) e^{-\frac{s}{h}}}{h} \, ds \, dx \, .
            \end{aligned}
        \end{equation*}
        Finally, due to Proposition \ref{propmol} (ii) and letting $l \to 0^+$, we get 
        \begin{equation*}
            \int_0^T \int_K \abs*{ [\phi]_l - \phi} \, dx dt \leq \biggl( \int_0^T \int_K \abs*{[\phi]_l - \phi}^p \biggl)^{\frac{1}{p}} (T \abs*{K})^{\frac{p-1}{p}} \to 0 \, 
        \end{equation*}
        A similar reasoning can be performed to estimate $[D\phi]_l$, and we can conclude that 
        \begin{equation*}
            \begin{aligned}
            &\int_0^T \int_K [\abs*{Du}^{p-2} Du]_h D\phi \, dx dt + \int_0^T \int_K \phi \partial_t[u^q]_h \, dx dt \\
           =& \int_K u^q(x,0) \int_0^T \frac{\phi(x,s) e^{-\frac{s}{h}}}{h} \, ds \, dx \, .
            \end{aligned}
        \end{equation*}
    \end{proof}

    \vskip 0.5 cm

    \begin{riconoscimenti}
         F.M. Cassanello is a member of GNAMPA (Gruppo Nazionale per l’Analisi Matematica, la Probabilità e le loro Applicazioni) of INdAM (Istituto Nazionale di Alta Matematica ’Francesco Severi’), and is supported by the research project \textit{Regolarità ed esistenza per operatori anisotropi} (GNAMPA, CUP E5324001950001).
         E. Henriques is a member of CMAT (Centro de Matem\'atica da  Universidade do Minho; Polo CMAT-UTAD), and is supported by Portuguese Funds through FCT - Funda\c c\~ao para a Ci\^encia e a Tecnologia - within the Project UID/00013/2025.
    \end{riconoscimenti}

\vskip4pt

\noindent
\textbf{Data Availability.} There is no data associated with this maunscript.

\vskip4pt

\noindent
\textbf{Conflict of interests.} The authors state no conflict of interests.


\end{document}